\definecolor{citation}{rgb}{0.11,0.67,0.84}
\definecolor{formula}{rgb}{0.1,0.2,0.6}
\definecolor{url}{rgb}{0.11,0.67,0.84}
\newcommand{\reqnomode}{\tagsleft@false}
\def\dx{\,{\rm d}x}
\def\dy{\,{\rm d}y}
\def\dz{\,{\rm d}z}
\def \d{\,{\rm d}}
\def\dist{\,{\rm dist}}
\def\supp{\,{\rm supp}}
\def\er{\mathbb R}
\DeclareRobustCommand*{\bfseries}{%
  \not@math@alphabet\bfseries\mathbf
  \fontseries\bfdefault\selectfont
  \boldmath
}
\newlength{\defbaselineskip}
\newcommand{\setlinespacing}[1]
           {\setlength{\baselineskip}{#1 \defbaselineskip}}
\newcommand{\mint}{\mathop{\int\hskip -1,05em -\, \!\!\!}\nolimits}
\newtheorem{theorem}{Theorem}
\newtheorem{corollary}{Corollary}
\newtheorem{definition}{Definition}
\newtheorem{remark}{Remark}
\newtheorem{lemma}{Lemma}
\newtheorem{proposition}{Proposition}
\newcommand{\N}{\mathbb{N}}
\newcommand\eps\varepsilon
\def\en{\mathbb N}
\def\er{\mathbb R}
\def\mean#1{\mathchoice%
          {\mathop{\kern 0.2em\vrule width 0.6em height 0.69678ex depth -0.58065ex
                  \kern -0.8em \intop}\nolimits_{\kern -0.4em#1}}%
          {\mathop{\kern 0.1em\vrule width 0.5em height 0.69678ex depth -0.60387ex
                  \kern -0.6em \intop}\nolimits_{#1}}%
          {\mathop{\kern 0.1em\vrule width 0.5em height 0.69678ex
              depth -0.60387ex
                  \kern -0.6em \intop}\nolimits_{#1}}%
          {\mathop{\kern 0.1em\vrule width 0.5em height 0.69678ex depth -0.60387ex
                  \kern -0.6em \intop}\nolimits_{#1}}}
\numberwithin{equation}{section}
\newcommand{\rr}{\varrho}
\newcommand{\snr}[1]{\lvert #1\rvert}
\newcommand{\nr}[1]{\lVert #1 \rVert}
\newcommand{\rif}[1]{(\ref{#1})}
\newcommand{\stackleq}[1]{\stackrel{\rif{#1}}{ \leq}}
\def\loc{\operatorname{loc}}
\def\eqn#1$$#2$${\begin{equation}\label#1#2\end{equation}}
\newcommand{\data}{\textit{\texttt{data}}}
\newcommand\ttau{\tilde \tau}
\title[Interpolative gap bounds for nonautonomous integrals]{Interpolative gap bounds for nonautonomous integrals}
\author[De Filippis]{Cristiana De Filippis}  \address{Cristiana De Filippis\\Dipartimento di Matematica "Giuseppe Peano", Universit\`a di Torino\\ Via Carlo Alberto 10, 10123 Torino, Italy} \email{\url{cristiana.defilippis@unito.it}}
\author[Mingione]{Giuseppe Mingione}  \address{Giuseppe Mingione\\Dipartimento SMFI, Universit\`a di Parma, Viale delle Scienze 53/a, Campus, 43124 Parma, Italy} \email{\url{giuseppe.mingione@unipr.it}}
\begin{document}
\subjclass[2010]{35J60, 35J70\vspace{1mm}} %%ALERT CHECK 35J60 23J70 35B65 35D40

\keywords{Regularity, non-autonomous functionals, $(p,q)$-growth\vspace{1mm}}

\thanks{{\it Acknowledgements.}\ This work is supported by the University of Turin via the project "Regolarit\'a e propriet\'a qualitative delle soluzioni di equazioni alle derivate parziali" and by the University of Parma via the project ``Regularity, Nonlinear Potential Theory and related topics".
\vspace{1mm}}

\maketitle

\centerline{To Vladimir Gilelevich Maz'ya, master of Analysis}

\begin{abstract}
For nonautonomous, nonuniformly elliptic integrals with so-called $(p,q)$-growth conditions, we show a general interpolation property allowing to get basic higher integrability results for H\"older continuous minimizers under improved bounds for the gap $q/p$. For this we introduce a new method, based on approximating the original, local functional, with mixed local/nonlocal ones, and allowing for suitable estimates in fractional Sobolev spaces. 
\end{abstract}
\vspace{3mm}
%{\small \tableofcontents}

\setlinespacing{1.08}
\maketitle

\section{Introduction}
In this paper we give new contributions to the regularity theory of minima of integral functionals of the type 
\eqn{genF}
$$
W^{1,1}_{\loc}(\Omega,\mathbb{R}^{N}) \ni w   \mapsto\mathcal F(w, \Omega):= \int_{\Omega} F(x, Dw) \dx\,.
$$
Here, as in the following, $\Omega \subset \er^n$ will denote an open subset, where $n\geq 2$ and $N\geq 1$; the function $F \colon \Omega \times \er^{N\times n} \to [0, \infty)$ will always be Carath\'edory integrand. The definition of (local) minimizer we use here is standard in the literature and it is given by 
\begin{definition}\label{defi-min} A map $u \in W^{1,1}_{\loc}(\Omega,\er^N)$ is a local minimizer of the functional $\mathcal F$ in~\eqref{genF} if, for every open subset $\tilde \Omega\Subset \Omega$, we have $\mathcal F(u;\tilde \Omega) <\infty$ and $\mathcal F(u;\tilde \Omega)\leq \mathcal F(w;\tilde \Omega)$ holds for every competitor $w \in u + W^{1,1}_0(\tilde \Omega; \er^N)$. 
\end{definition}
We shall abbreviate local minimizer simply by minimizer. The main point in this paper is that the integrand $F:\Omega \times \er^{N\times n} \to [0, \infty)$ is both nonautonomous and nonuniformly elliptic. Specifically, following the notation used in \cite{BM, ciccio,DM}, we are in the situation when the {\em ellipticity ratio}
$$\mathfrak {R}_{F}(z,B)\equiv \mathfrak {R}(z,B):=\frac{\sup_{x\in B}\, \mbox{highest eigenvalue of} \ \partial_{zz}F(x,z)}{\inf_{x\in B}\, \mbox{lowest eigenvalue of} \ \partial_{zz}F(x,z)}$$
($B \subset \Omega$ is any ball), is such that $\mathfrak {R}_{F}(z,B)\to \infty$ when $|z|\to \infty$ for at least one ball $B$. This happens for instance in the paramount example given by the double phase functional
\eqn{pqfunctional}
$$
w \mapsto \int_{\Omega} \left[|Dw|^p+a(x)|Dw|^{q}\right]\dx\,,\ \  \mbox{where} \ \ 1 < p < q\,, \ 0 \leq a(\cdot)\in C^{\alpha}(\Omega)\,, \ \ \alpha \in (0,1]\;.$$
Indeed, on a ball $B$ such that $B\cap \{a(x)=0\}\not =0$, in the case of \rif{pqfunctional} we have
\eqn{dopo}
$$\mathfrak {R}(z,B)\approx \|a\|_{L^{\infty}(B)}|z|^{q-p}+1\;.$$
The functional in \rif{pqfunctional} has been introduced by Zhikov \cite{Z1, Z2, Z3, Z4} in the setting of Homogenization theory \cite{Z4}. For minima of \rif{pqfunctional}, there is a by a now a rather complete regularity theory, initiated many years ago in \cite{ELM}; see \cite{BCM} for the most updated statements. In particular, it has been proved that the condition 
\eqn{maincond}
$$
\frac{q}{p} \leq 1+\frac{\alpha}{n}
$$
is necessary \cite{ELM, FMM} and sufficient \cite{BCM} for the local H\"older gradient continuity of minima. In particular, failure of \rif{maincond} implies that in general, minimizers, that by definition are $W^{1,p}$-regular, do not belong to $W^{1,q}$. Therefore even the initial integrability bootstrap fails. The bound in \rif{maincond} reflects the delicate balance between the smallness of $a(\cdot)$ around $\{a(x)=0\}\cap B$, and the growth of $F(x,z)$ with respect to the gradient variable $z$, which is necessary to keep $\mathfrak {R}(z,B)$ under control when proving a priori estimates for minima. We refer to \cite{BCM, CM2} for a larger discussion. 

Functionals of the type in \rif{pqfunctional} fall in the realm of so-called functionals with $(p,q)$-growth, i.e. those satisfying unbalanced ellipticity conditions of the type (for $|z|$ large)
\eqn{seconde}
$$
|z|^{p-2} \mathds{I}_{\rm d} \lesssim \partial_{zz} F(\cdot, z) \lesssim  |z|^{q-2} \mathds{I}_{\rm d} \Longrightarrow  \mathfrak {R}_{F}(z,B) \lesssim |z|^{q-p}
$$
for $1< p \leq  q$. These have been considered in a special case by Uraltseva \& Urdaletova  \cite{UU}, and then, systematically, in the seminal papers by Marcellini \cite{M0, M1, M2}. The case $p=q$ falls therefore in the realm of uniformly elliptic problems. A large literature devoted to the study of regularity theory for such functionals is nowadays available; we refer to the surveys \cite{M3, Majmaa, dark} for a reasonable overview. In this context, the ratio $q/p$ is usually called gap. A distinctive feature of such functionals is that, in general, gap bounds of the type 
\eqn{generalbound}
$$
\frac{q}{p} < 1 + {\rm o}(n)\,, \qquad    {\rm o}(n)\approx \frac 1n
$$
are necessary and sufficient for regularity of minima \cite{ELM, M1}. More recent developments concerning bounds of the type in \rif{generalbound} are in \cite{BS, BB, BF, BO, dl, doh, HS, HH, HHO, HO, S}. Notice that the bound appearing in \rif{maincond} is of the type in \rif{generalbound}. In this situation 

An interesting phenomenon of interpolative nature appears when considering a priori more regular minimizers. For instance, assuming that minima are bounded, leads to non-dimensional bounds on the distance $q-p$, that can be made independent of $n$; see for instance \cite{BCM, CKP, choe}. In particular, in \cite{CM2} the authors have proved that assuming that minima are bounded allows to replace \rif{maincond} by 
\eqn{boulim}
$$
q \leq p+\alpha\,.
$$
This is better than \rif{maincond} provided $p\leq n$ (that is when boundedness of minima is not automatically implied by Sobolev-Morrey embedding, and it is therefore a genuine assumption). The bound in \rif{boulim} is optimal as shown by the counterexamples in \cite{ELM, FMM}. A partial generalization of the result in \cite{BCM} has been obtained in \cite{DM}, under the same bound in \rif{boulim}, with strict inequality. More recently, in the specific case of the double phase functional \rif{pqfunctional}, in \cite{BCM} the authors have proved that, assuming a priori $C^{0,\gamma}$-regularity, minimizers are regular provided
\eqn{asy0}
$$
q < p+\frac{\alpha}{1-\gamma}\,,
$$
thus providing a further weakened gap bound. This condition is sharp too, as recently proved in \cite{BDS}. It gives back \rif{boulim} for $\gamma \to 0$. 
In particular, notice that the asymptotic of \rif{asy0} for $\gamma \to 1$ is of the type
\eqn{asy}
$$
q
< p + {\rm O}(\gamma)\,, \qquad    {\rm O}(\gamma)\approx  \frac{1}{1-\gamma}\;.$$
This is in accordance with the fact that the focal point in regularity for $(p,q)$-problems is Lipschitz continuity of minima. Once this is achieved, the functional in question goes back to the realm of uniformly elliptic ones as growth conditions for large $|z|$ become irrelevant; see also \cite[Section 6]{DM}. Bounds of the type in \rif{asy0} have an interpolative nature. In fact, in the scheme of Caccioppoli type inequalities coming up in regularity estimates, the a priori, assumed regularity on minima, allows for a better control of nonuniform ellipticity. 

The above mentioned result in \cite{BCM} holds in the scalar case $N=1$ and for the specific functional in \rif{pqfunctional}; these facts  play a crucial role in the analysis there. Therefore the question arises of whether or not bounds with asymptotics as in \rif{asy} imply regularity of $C^{0, \gamma}$-minima for general functionals with $(p,q)$-growth. The question is open already in the autonomous case $F(x, Dw)\equiv F(Dw)$. We are here able to give a first, positive answer by showing that bounds as in \rif{asy} are in general sufficient to prove the basic step of improving the regularity of $C^{0, \gamma}$-minima from $W^{1,p}$ to $W^{1,q}$. See condition \rif{tip} below. This initial integrability bootstrap is usually the starting point from proving higher regularity - see \cite{ELM, DM} - and it is essentially the best possible result in the vectorial case considered here.

\subsection{Statements of the results} In order to quantify ellipticity, we shall use assumptions that are more general of those using the Hessian of $F(\cdot)$, as in \rif{seconde}, following the approach in \cite{DM, ELM}. Indeed, we shall consider an integrand $F(\cdot)$ which is H\"older continuous in the space variable $x$ and differentiable in the gradient variable $z$, and such that $\partial_z F(\cdot)$ is still a Car\'atheodory integrand. Monotonicity/ellipticity and growth assumptions are described by requiring that
\eqn{assF}
$$
\left\{
\begin{array}{c}
\nu |z|^{p}\leq F(x,z) \leq L(|z|^{q}+1)\\ [6 pt]
\nu\left(|z_{1}|^{2} +
  |z_{2}|^{2}+\mu^{2}\right)^{(p-2)/2} |z_{1}-z_{2}|^{2} \leq
\left( \partial_z F(x,z_{1})-\partial_z F(x,z_{2})\right)\cdot\left(z_{1}-z_{2}\right) \\ [6 pt]
|\partial_z F(x,z)-\partial_z F(y,z)|
\leq L |x-y|^{\alpha}(|z|^{q-1}+1)\;,
 \end{array}\right.
 $$
 %%%bozze with respect to the gradient variable
hold whenever $x,y\in \Omega$, $z, z_1, z_2 \in \er^{N\times n}$, where $1<p\leq q $, $\mu \in [0,1]$, $\alpha \in (0,1]$ and $0< \nu \leq 1 \leq L$ are fixed constants. Notice that \rif{assF}$_1$ implies that minimizers of $\mathcal F$ are automatically locally $W^{1,p}$-regular. As we are dealing with a nonautonomous functional, the so-called Lavrentiev phenomenon naturally comes into the play \cite{Mlav, Z1, Z2, Z3}. Its nonoccurrence is a necessary condition for regularity of minima. Therefore we are led to consider a functional, called Lavrentiev gap, providing a quantitative measure of such a phenomenon. We refer the reader to \cite{ABF, DM, ELM} for more information. Since we are in fact interested in the regularity of \emph{a priori} H\"older continuous minima of the functional \eqref{genF}, we shall adopt suitable definitions of relaxed and gap functionals aiming at exploiting this fact. For this, let us consider a functional of the type in \rif{genF}, where the integrand $F(\cdot)$ satisfies \rif{assF}, and a number $H> 0$. Given a ball $B \subset \Omega$, the natural relaxation of $\mathcal{F}$ we consider here is
\eqn{lav0}
$$
\bar{\mathcal{F}}_H(w,B):=\inf_{\{w_{j}\}\in \mathcal{C}_H(w,B)}\left\{\liminf_{j\to \infty}\int_{B}F(x,Dw_{j}) \, dx\right\},
$$
defined for any $w \in W^{1,1}(B, \er^n)$, where (see \rif{notationh} below for the notation)
\eqn{ilC}
$$
\mathcal{C}_H(w,B):=\left\{\{w_{j}\}\subset W^{1,\infty}(B,\mathbb{R}^{N})\colon w_{j}\rightharpoonup w \ \mbox{in} \ W^{1,p}(B,\mathbb{R}^{N}), \ \sup_{j}\, [w_{j}]_{0,\gamma;B} \leq H \right\}.
$$  
Accordingly, as in \cite{ABF, DM, ELM}, we consider the Lavrentiev gap functional
\eqn{lav1}
$$
\mathcal {L}_{F,H}(w,B):= \bar{\mathcal{F}}_H(w,B)- \mathcal{F}(w,B)\,,
$$
defined for every $w \in W^{1,1}(B)$ such that $ \mathcal{F}(w,B)$ is finite; we set $\mathcal {L}_{F,H}(w,B)=0$ otherwise. 
\begin{remark}\label{bullets}{\em 
We here collect a few immediate consequences of the above definitions. 
\begin{itemize}
\item The convexity of $F(\cdot)$, implied by \rif{assF}$_2$, provides that the functional $\mathcal F$ in \rif{genF} is lower semicontinuous with respect to the weak convergence of $W^{1,p}$. It follows that 
$\mathcal {L}_{F,H}(\cdot,B)\geq 0$ holds whenever $w \in W^{1,1}(B, \er^N)$ and $B \Subset\Omega$ is a ball.  
\item Consider a map $w\in W^{1,p}_{\loc}(\Omega,\mathbb{R}^{N})\cap C^{0,\gamma}_{\loc}(\Omega,\mathbb{R}^{N})$. A simple mollification argument then shows that $\mathcal{C}_H(w,B)$ is non empty whenever $B \Subset \Omega$ for $H \approx \|w\|_{C^{0,\gamma}(B)}$. For this see also the proofs in Section \ref{convsec0}  below. This anticipates that, when considering a $C^{0,\gamma}$-regular minimizer $u$ of the functional $\mathcal F$, it will happen that we shall use $\mathcal {L}_{F,H}(u,B)$ with $H \approx \|u\|_{C^{0,\gamma}(B)}$ (see for instance Theorem \ref{t2} below, and compare \rif{stima2} with \rif{ine}). 
\item A straightforward consequence of \rif{lav0}-\rif{lav1} is
\begin{proposition} \label{approssi1}
 Let $w\in W^{1,1}(B, \er^N)$ be such that
$\mathcal{F}(w,B)$ is finite, where $B \subset \Omega$ is a fixed ball. Then 
$
\mathcal{F}(w,B)=\bar{\mathcal{F}}_H(w,B)$ for some $H>0$, if and only if there exists $\{w_{j}\}\in \mathcal{C}_H(w,B)$ such that 
\eqn{approenergia}
$$\mathcal{F}(w_{j},B)\to \mathcal{F}(w,B)\,.$$ 
\end{proposition}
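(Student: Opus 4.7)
The plan is to exploit only the definitions in \rif{lav0}--\rif{ilC} together with the lower semicontinuity of $\mathcal F$ with respect to weak $W^{1,p}$-convergence, which is already recorded in the first bullet of Remark \ref{bullets}. The two implications of the equivalence are handled separately: the ``if'' direction is a one-line consequence of the definition of infimum, and the content lies in a standard diagonal extraction for the ``only if'' direction.

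For the \emph{if} direction, suppose $\{w_{j}\} \in \mathcal C_H(w,B)$ satisfies $\mathcal F(w_j,B) \to \mathcal F(w,B)$. Using $\{w_j\}$ as an admissible competitor in \rif{lav0} gives
$$
\bar{\mathcal F}_H(w,B)\;\le\; \liminf_{j\to\infty}\, \mathcal F(w_j,B)\;=\;\mathcal F(w,B),
$$
while the weak $W^{1,p}$-lower semicontinuity of $\mathcal F$ applied to any $\{w_j\}\in \mathcal C_H(w,B)$ gives the reverse inequality $\bar{\mathcal F}_H(w,B)\ge \mathcal F(w,B)$, so that equality holds.

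For the \emph{only if} direction, assume $\bar{\mathcal F}_H(w,B)=\mathcal F(w,B)$. By definition of infimum, for every $k\in \N$ I pick $\{w_j^{(k)}\}_j \in \mathcal C_H(w,B)$ with
$$
\liminf_{j\to \infty}\, \mathcal F(w_j^{(k)},B)\;\le\; \mathcal F(w,B)+1/k,
$$
and then, passing to a (non-relabelled) subsequence realising the liminf on a tail, I arrange that $\mathcal F(w_j^{(k)},B)\le \mathcal F(w,B)+2/k$ for all $j\ge j_k$. To collapse this double sequence to a single one, I use that each $\{w_j^{(k)}\}_j$ is norm-bounded in $W^{1,p}(B,\RN)$ (being weakly convergent) and that, since $1<p<\infty$ makes $W^{1,p}$ reflexive and separable, the weak topology is metrisable on norm-bounded sets; denoting such a metric by $d$, I choose $j_k^*\ge j_k$ so large that $d(w_{j_k^*}^{(k)},w)<1/k$. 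The diagonal sequence $\{w_{j_k^*}^{(k)}\}_k$ then still lies in $W^{1,\infty}(B,\RN)$, has $[\, \cdot\, ]_{0,\gamma;B}\le H$, and converges weakly in $W^{1,p}$ to $w$, hence belongs to $\mathcal C_H(w,B)$. By construction $\limsup_k \mathcal F(w_{j_k^*}^{(k)},B)\le \mathcal F(w,B)$, and combining with the lower semicontinuity lower bound $\liminf_k \mathcal F(w_{j_k^*}^{(k)},B)\ge \mathcal F(w,B)$ yields \rif{approenergia}.

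The only mildly delicate point is the joint diagonal extraction ensuring both the energy bound and the weak-$W^{1,p}$ convergence along the diagonal; the Hölder seminorm bound and the $W^{1,\infty}$-membership transfer trivially from the columns to the diagonal, so that no further work is required to check $\{w_{j_k^*}^{(k)}\}_k \in \mathcal C_H(w,B)$.
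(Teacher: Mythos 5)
Your ``if'' direction is exactly the paper's argument: the competitor $\{w_j\}$ shows $\bar{\mathcal F}_H(w,B)\le\lim_j\mathcal F(w_j,B)=\mathcal F(w,B)$, and the reverse inequality is the lower semicontinuity estimate from the first bullet of Remark~\ref{bullets}. Your ``only if'' direction is a more careful write-up of the diagonalization that the paper compresses into the phrase ``by very definition we can find a (minimizing) sequence''; spelling it out is the right instinct, since the definition \eqref{lav0} is an infimum over \emph{sequences} of a liminf, not an infimum of a function, and a single sequence achieving the value $\bar{\mathcal F}_H(w,B)$ is not granted for free.

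There is, however, a gap in the passage from $d(w_{j_k^*}^{(k)},w)<1/k$ to weak $W^{1,p}$-convergence of the diagonal. A metric $d$ induces the weak topology only when restricted to a \emph{fixed} norm-bounded set; your argument notes that each column $\{w_j^{(k)}\}_j$ is bounded (true, because it is weakly convergent), but these bounds may grow with $k$, so the diagonal need not lie in one bounded set, and $d$-convergence of an unbounded sequence does not imply weak convergence. You would need to first establish a bound on $\|w_{j_k^*}^{(k)}\|_{W^{1,p}(B)}$ uniform in $k$. This is available but must be said: by \eqref{assF}$_1$ and your energy choice $\mathcal F(w_{j_k^*}^{(k)},B)\le\mathcal F(w,B)+2/k$, the gradients satisfy $\nu\|Dw_{j_k^*}^{(k)}\|_{L^p(B)}^p\le\mathcal F(w,B)+2$; for the $L^p$ norms, one can either invoke $[w_j^{(k)}]_{0,\gamma;B}\le H$ together with convergence of the averages $(w_j^{(k)})_B\to (w)_B$, or, more simply, use the compact embedding $W^{1,p}(B)\hookrightarrow L^p(B)$ (so $w_j^{(k)}\to w$ strongly in $L^p$ as $j\to\infty$ in each column) and add the requirement $\|w_{j_k^*}^{(k)}-w\|_{L^p(B)}<1/k$ when choosing $j_k^*$. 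The latter route also makes the metric machinery unnecessary: a sequence which is bounded in $W^{1,p}$ and converges strongly in $L^p$ to $w$ converges weakly in $W^{1,p}$ to $w$, by reflexivity and uniqueness of the limit. With this repair the proof is complete and is in the same spirit as the paper's, just more explicit.
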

\item Accordingly, if $\mathcal{C}_H(w,B)$ is empty, then $\bar{\mathcal{F}}_H(w,B)=\infty$; moreover, again in this case, if $ \mathcal{F}(w,B)$ is finite, then $\bar{\mathcal{F}}_H(w,B)=\mathcal {L}_{F,H}(w,B) =\infty$. This is in accordance with the fact that when $w$ is not $\gamma-$H\"older continuous, it is in general impossible to build a sequence from $\mathcal{C}_H(w,B)$, for any $H>0$, approximating $w$ in energy in the sense of  \rif{approenergia} below. Such approximations are in fact usually built via smooth convolutions when $w$ is already H\"older continuous. 
\item Conversely, if $\mathcal{C}_H(w,B)$ is non empty, then $w \in C^{0, \gamma}(B)$ and $[w]_{0, \gamma;B}\leq H$.
\item If $w$ is a locally $W^{1,q}\cap C^{0,\gamma}$-regular map, then a density and convolution argument gives that  $\mathcal {L}_{F,H}(w,B)=0$ holds for every ball $B \Subset \Omega$, with $H \approx \|w\|_{C^{0,\gamma}(B)}+1$. This last condition is therefore in a sense necessary to prove the local $W^{1,q}$-regularity of $C^{0,\gamma}$-regular minima of the original functional $\mathcal F$.
\end{itemize}}
\end{remark}
Accordingly to the last point in Remark \ref{bullets}, the main result of this paper is now
\begin{theorem}\label{t1}
Let $u\in W^{1,p}_{\loc}(\Omega,\mathbb{R}^{N})$ be a minimizer of functional \eqref{genF}, under assumptions \eqref{assF} and 
\begin{flalign}\label{pq}
q<p+\frac{\min\{\alpha,2\gamma\}}{\vartheta(1-\gamma)}\,,\qquad \mbox{where} \ \ \vartheta:=\begin{cases}
\ 1\quad &\mbox{if}\ \ p\ge 2\\
\ \frac{2}{p}\quad &\mbox{if} \ \ 1<p<2\,,
\end{cases}\, \qquad 0<\gamma <1\,.
\end{flalign}
Assume also that
\begin{flalign}\label{nolav}
\mathcal {L}_{F,H}(u,B_{r})=0
\end{flalign}
holds for a ball $B_{r}\Subset \Omega$ with $r\le 1$, and for some $H >0$. If $\mathfrak{q}$ is a number such that
\begin{flalign}\label{tip}
q\leq \mathfrak{q}<p+\frac{\min\{\alpha,2\gamma\}}{\vartheta(1-\gamma)}
\end{flalign}
and $B_{\rr}\Subset B_{r}$ is a ball concentric to $B_{r}$, then
\eqn{ine}
$$
\nr{Du}_{L^{\mathfrak{q}}(B_{\rr})}\le \frac{c}{(r-\rr)^{\kappa_{1}}}\left([\mathcal{F}(u,B_{r})]^{1/p}+H+1\right)^{\kappa_{2}}
$$
holds for constants $c\equiv c(n,p,q, \nu, L, \alpha,\gamma,\mathfrak q)$ and $  \kappa_{1},\kappa_{2}\equiv    \kappa_{1},\kappa_{2}(n,p,q,\alpha,\gamma,\mathfrak{q})$. In particular, if \eqref{nolav} holds for every ball $B_r\Subset \Omega$, then $u\in W^{1,q}_{\loc}(\Omega,\mathbb{R}^{N})$. 
\end{theorem}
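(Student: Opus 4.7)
The plan is to run a two-layer approximation and to rebuild the regularity theory of the approximants using fractional, rather than merely local, a priori estimates. Since $\mathcal{L}_{F,H}(u,B_{r})=0$, Proposition \ref{approssi1} yields a sequence $\{u_{j}\}\in \mathcal{C}_{H}(u,B_{r})$ with $u_{j}\in W^{1,\infty}(B_{r};\mathbb{R}^{N})$, $[u_{j}]_{0,\gamma;B_{r}}\le H$, $u_{j}\rightharpoonup u$ in $W^{1,p}$, and $\mathcal F(u_{j},B_{r})\to \mathcal F(u,B_{r})$; this fixes the outer approximation.

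The inner approximation is the novel ingredient: for each $j$ and $\varepsilon\in(0,1)$, I would consider the mixed local/nonlocal functional
$$
\mathcal F_{\varepsilon,j}(w,B_{r}):=\int_{B_{r}}\!\!\left(F(x,Dw)+\varepsilon(|Dw|^{q}+1)\right)\dx+\varepsilon \int_{B_{r}}\!\!\int_{B_{r}}\!\frac{|w(x)-w(y)|^{p}}{|x-y|^{n+sp}}\dxy,
$$
with a fractional exponent $s\in(0,\gamma)$ to be tuned, and minimize it in $u_{j}+W^{1,q}_{0}(B_{r};\mathbb{R}^{N})$. The $q$-coercive perturbation plus the direct method produce a unique minimizer $v_{j}^{\varepsilon}$ which, since the boundary datum is Lipschitz, is regular enough to justify differentiating the Euler--Lagrange system (here the standard $(p,q)$-theory for bounded minimizers applies). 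Testing with the competitor $u_{j}$ and using $[u_{j}]_{0,\gamma}\le H$ to majorize the Gagliardo double integral (which needs $s<\gamma$), one obtains bounds on $\int_{B_{r}}|Dv_{j}^{\varepsilon}|^{p}$, $\varepsilon \int_{B_{r}}|Dv_{j}^{\varepsilon}|^{q}$, and on the fractional seminorm $[v_{j}^{\varepsilon}]_{s,p;B_{r}}$, all uniform in $\varepsilon,j$ in terms of $\mathcal F(u,B_{r})$ and $H$.

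The quantitative heart is a Caccioppoli-type inequality for $v_{j}^{\varepsilon}$ in which the fractional seminorm of $v_{j}^{\varepsilon}$, not its $L^{\infty}$-norm, domesticates the nonuniform ellipticity. Testing the Euler--Lagrange system with a product of a cut-off $\eta$ and $(v_{j}^{\varepsilon}-(u_{j})_{B_{r}})$, the dangerous term $L|x-y|^{\alpha}(|Dv_{j}^{\varepsilon}|^{q-1}+1)$ produced by \rif{assF}$_{3}$ is split by an interpolation estimate between $W^{s,p}$ and $W^{1,p}$ into a reabsorbable multiple of the standard $V$-function Caccioppoli quantity plus a remainder controlled by $[v_{j}^{\varepsilon}]_{s,p}^{p}$. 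Pushing $s\uparrow \gamma$, the exponent arithmetic collapses precisely onto $q-p<\min\{\alpha,2\gamma\}/[\vartheta(1-\gamma)]$: the factor $\min\{\alpha,2\gamma\}$ reflects the two competing regularity sources, the Hölder continuity of the coefficients ($\alpha$) and of the minimizer itself ($\gamma$, doubled because quadratic test quantities enter the estimate), while $\vartheta(1-\gamma)$ is forced by the interpolation exponent balancing $W^{s,p}$ and $W^{1,p}$, with $\vartheta=2/p$ in the subquadratic range because \rif{assF}$_{2}$ only controls the $V$-function difference. A standard iteration on concentric balls upgrades this into a reverse-Hölder inequality of exponent $\mathfrak q/p$ and yields \rif{ine} for $v_{j}^{\varepsilon}$ with constants independent of $\varepsilon,j$.

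Passing to the limit $\varepsilon\to 0$ with $j$ fixed, weak $W^{1,p}$-compactness plus the strict convexity granted by \rif{assF}$_{2}$ identify the limit of $v_{j}^{\varepsilon}$ as $u_{j}$, and weak lower semicontinuity in $L^{\mathfrak q}$ transports the bound to $u_{j}$. A second limit $j\to \infty$, using the energy convergence $\mathcal F(u_{j},B_{r})\to \mathcal F(u,B_{r})$ to upgrade the weak $W^{1,p}$-convergence of $u_{j}$ to strong convergence, transfers the estimate to $u$. The main obstacle I anticipate is the Caccioppoli computation itself: one has to distribute the gradient power $q-1$ between the fractional seminorm, the $L^{p}$-gradient and the $L^{q}$-excess so that all interpolation constants are absorbable, and in the degenerate range $p<2$ one must handle the mismatch between $|z_{1}-z_{2}|^{p}$ and the $V$-function, which is exactly what produces the factor $\vartheta$ in the sharp threshold \rif{pq}.
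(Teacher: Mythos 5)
Your high-level strategy — an outer approximation via Proposition~\ref{approssi1}, then an inner approximation by mixed local/nonlocal regularized minimizers, followed by a fractional Caccioppoli and Gagliardo--Nirenberg interpolation — is the right template and matches the structure of the actual proof. But the concrete design of your nonlocal penalty is fatally off, in a way that collapses the key step.

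You take the penalty to be
$\varepsilon\int_{B_r}\int_{B_r} |w(x)-w(y)|^p/|x-y|^{n+sp}\,\dxy$,
an $\varepsilon$-weighted, \emph{untruncated} Gagliardo seminorm of order $(s,p)$. Testing minimality against the competitor $u_j$ gives, at best, a bound of the form $\varepsilon\,[v_j^\varepsilon]_{s,p;B_r}^p\le \mathcal F(u_j,B_r)+\varepsilon\,C(u_j)$, i.e.\ $[v_j^\varepsilon]_{s,p;B_r}^p\lesssim \varepsilon^{-1}\mathcal F(u_j,B_r)+C(u_j)$. This blows up as $\varepsilon\to 0$, so the claim ``bounds on $[v_j^\varepsilon]_{s,p;B_r}$ \dots\ all uniform in $\varepsilon,j$'' is false, and without a uniform fractional bound the Caccioppoli reabsorption you describe has no input. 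There is a second, independent obstruction: $W^{s,p}$ only embeds into a H\"older space when $sp>n$, which is incompatible with $s<\gamma<1$ as soon as $p\le n$; so even a uniform bound on $[v_j^\varepsilon]_{s,p}$ would not transfer a H\"older modulus to the approximating minimizers, and in the vectorial case there is no maximum principle to supply one from the Lipschitz boundary datum.

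The actual proof avoids both problems by two deliberate design choices you are missing. First, the Gagliardo penalty is \emph{truncated at the a priori H\"older level}: it is $\int\int \big(|w(x)-w(y)|^2-M^2|x-y|^{2\gamma}\big)_+^d/|x-y|^{n+2sd}\,\dx\dy$ with $M\approx H$, so it vanishes identically on the (globally H\"older-extended) boundary data $\bar v_j$ and therefore adds nothing to the competitor's energy; crucially it carries \emph{no} $\varepsilon$ prefactor, so as $j\to\infty$ this term on $u_j$ is forced to zero and, after adding back $M^{2d}|x-y|^{2\gamma}$, one gets a genuinely uniform bound $[u_j]_{s,2d;\er^n}\lesssim r^{(n+2d(\gamma-s))/(2d)}H$, hence — since the exponent is $2d>\max\{q,n\}$, not $p$ — a uniform $C^{0,\beta_0}$ modulus with $\beta_0=s-n/(2d)>0$. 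Second, a companion truncated $L^{2d}$-penalty $\int(|w|^2-M_0^2)_+^d$ controls the $L^\infty$-norm; there is no way around an explicit device like this in the vectorial setting, again because no maximum principle is available. Only the gradient-regularizing term $\varepsilon_j\int(|Dw|^2+\mu^2)^d$ carries a vanishing prefactor. Once you force the penalty to be zero on the boundary datum by truncation and drop the $\varepsilon$-prefactor from the Gagliardo term, the compactness and the H\"older-modulus transfer both work, and the remainder of your outline (fractional Caccioppoli, Nikolskii-type gain from the $|h|^\alpha$-modulus in \rif{assF}$_3$, Gagliardo--Nirenberg interpolation between $W^{s,2d}$ and $W^{1+\beta/\vartheta p,p}$, and an iteration) is essentially sound and matches the paper.
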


\begin{remark} \emph{The reader might of course wonder where the a priori $C^{0, \gamma}$-regularity of the minimizer $u$ is assumed in Theorem \ref{t1}. This is hidden in assumption \rif{nolav}. In fact, as $F(\cdot, Du)\in L^1_{\loc}(\Omega)$ by minimality, it follows from the fourth and the fifth point of Remark \ref{bullets} that $\mathcal{C}_H(u,B_r)$ is non-empty and therefore $u \in C^{0, \gamma}(B_r,\er^N)$ (with $[u]_{0, \gamma;B_r}\leq H$). This said, the bound in \rif{tip} is exactly of the type in \rif{asy}. Let us now consider the case $p\geq2$. When $\alpha \leq 2\gamma$ the bound in \rif{pq} coincides with \rif{asy0}, that is the one considered in \cite{BCM} for the specific functional \rif{pqfunctional}.}
\end{remark}
As mentioned above, assumption \rif{nolav} is in a sense necessary to prove local $W^{1,q}$-regularity of minimizers of $\mathcal F$. As a matter of fact, condition \rif{nolav} is always satisfied in a large number of situations. A very relevant one is when the integrand is autonomous, i.e., $F(x, Du)\equiv F(Du)$. In such a case, the Lavrentiev gap disappears due to basic convexity arguments, and we have:
\begin{theorem}\label{t2}
Let $u\in W^{1,p}_{\loc}(\Omega,\mathbb{R}^{N})\cap C^{0,\gamma}_{\loc}(\Omega,\mathbb{R}^{N})$ be a minimizer of functional \eqref{genF}, where $0<\gamma < 1$, under assumptions \eqref{assF} with $F(x, z)\equiv F(z)$ and 
\begin{flalign}\label{pqa}
q<p+\frac{\min\{1,2\gamma\}}{\vartheta(1-\gamma)}\,,
\end{flalign}
where $\vartheta$ is as in \eqref{pq}. 
If $\mathfrak{q}$ is a number such that
\begin{flalign}\label{tipau}
q\leq \mathfrak{q}<p+\frac{\min\{1,2\gamma\}}{\vartheta(1-\gamma)}
\end{flalign}
and $B_{\rr}\Subset B_{r}\Subset \Omega$ are concentric balls, then 
\eqn{stima2}
$$
\nr{Du}_{L^{\mathfrak{q}}(B_{\rr})}\le \frac{c}{(r-\rr)^{\kappa_{1}}}\left([\mathcal{F}(u,B_{r})]^{1/p}+[u]_{0, \gamma;B_r}+1\right)^{\kappa_{2}}
$$
holds with $c, \kappa_1, \kappa_2$ as in \eqref{ine}. In particular, $u\in W^{1,q}_{\loc}(\Omega,\mathbb{R}^{N})$. 
\end{theorem}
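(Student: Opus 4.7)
\smallskip

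\textbf{Reduction to Theorem \ref{t1}.} The strategy is to deduce Theorem \ref{t2} directly from Theorem \ref{t1}. The key observation is that in the autonomous case $F(x,z)\equiv F(z)$, condition \eqref{assF}$_3$ holds trivially with any H\"older exponent; we may in particular take $\alpha=1$, in which case \eqref{pq} reads exactly as \eqref{pqa} and \eqref{tip} as \eqref{tipau}. It therefore suffices to verify the Lavrentiev gap condition \eqref{nolav} on every ball $B_r\Subset\Omega$, with $H\approx [u]_{0,\gamma;B_r}+1$, and then apply Theorem \ref{t1} with $\alpha=1$ to obtain \eqref{stima2}.

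\smallskip

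\textbf{Construction of the recovering sequence.} To establish \eqref{nolav}, I would use standard convolution. Fix $B_r\Subset\Omega$ and, working on a slightly larger ball $B_{r'}\Subset\Omega$ on which $u$ is still $C^{0,\gamma}$-regular (as assumed in Theorem \ref{t2}), set $u_\varepsilon:=u*\rho_\varepsilon$ for mollifiers $\rho_\varepsilon$. Since $u\in C^{0,\gamma}_{\loc}\subset L^\infty_{\loc}$, one has $u_\varepsilon\in C^\infty\cap W^{1,\infty}(B_r,\er^N)$ for every $\varepsilon$ small. The pointwise bound
\begin{equation*}
|u_\varepsilon(x)-u_\varepsilon(y)|\le \int|u(x-z)-u(y-z)|\rho_\varepsilon(z)\,dz\le [u]_{0,\gamma;B_{r'}}|x-y|^\gamma
\end{equation*}
yields $[u_\varepsilon]_{0,\gamma;B_r}\le [u]_{0,\gamma;B_{r'}}$, and the usual mollification properties give $u_\varepsilon\to u$ in $W^{1,p}(B_r,\er^N)$. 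Hence $\{u_\varepsilon\}\in\mathcal{C}_H(u,B_r)$ with $H:= [u]_{0,\gamma;B_{r'}}$.

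\smallskip

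\textbf{Energy convergence.} Monotonicity \eqref{assF}$_2$ implies convexity of $F$, so Jensen's inequality applied to the convolution gives, pointwise,
\begin{equation*}
F(Du_\varepsilon(x))=F\bigl(((Du)*\rho_\varepsilon)(x)\bigr)\le (F(Du)*\rho_\varepsilon)(x).
\end{equation*}
Integrating over $B_r$ and using $F(Du)\in L^1_{\loc}(\Omega)$ (by minimality of $u$) together with standard $L^1$-continuity of translations yields $\limsup_{\varepsilon\to 0}\int_{B_r}F(Du_\varepsilon)\,dx\le \int_{B_{r'}}F(Du)\,dx$, and sending $r'\searrow r$ using absolute continuity of the integral recovers $\int_{B_r}F(Du)$ on the right. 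Combined with the $W^{1,p}$-lower semicontinuity of $\mathcal F$ (again from convexity of $F$), this gives the full convergence $\mathcal F(u_\varepsilon,B_r)\to \mathcal F(u,B_r)$. By Proposition \ref{approssi1} we conclude $\mathcal L_{F,H}(u,B_r)=0$ with $H\approx [u]_{0,\gamma;B_r}$ (after an arbitrarily small enlargement of the ball, absorbable into the $+1$ in \eqref{stima2}).

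\smallskip

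\textbf{Conclusion and main obstacle.} With \eqref{nolav} verified on every ball $B_r\Subset\Omega$, Theorem \ref{t1} applied with $\alpha=1$ and $H\approx [u]_{0,\gamma;B_r}+1$ delivers \eqref{ine} on concentric balls $B_\rr\Subset B_r$, which is exactly \eqref{stima2}; letting $\rho$ vary and $r$ exhaust $\Omega$ gives $u\in W^{1,q}_{\loc}(\Omega,\er^N)$. The only mildly delicate step is the radius bookkeeping in the mollification (enlarging the ball slightly to accommodate the convolution while keeping $H$ controlled by $[u]_{0,\gamma;B_r}$), but this is purely technical. All the genuine analytic work is absorbed by Theorem \ref{t1}; the autonomous case is, as promised, a clean corollary.
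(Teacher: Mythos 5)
Your proposal is correct and follows essentially the same path as the paper: verify \eqref{nolav} by mollification plus Jensen's inequality (the content of Corollary~\ref{c0}), then apply Theorem~\ref{t1} with $\alpha=1$, which is legitimate because \eqref{assF}$_3$ is vacuous for autonomous integrands. The only slight imprecision is the remark that the enlargement to $B_{r'}$ is ``absorbable into the $+1$''; as the paper does, one should instead send $\delta\to 0$ (with $r'=(1+\delta)r$) and use $[u]_{0,\gamma;(1+\delta)B_r}\to[u]_{0,\gamma;B_r}$.
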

Another situation when \rif{nolav} can be automatically satisfied, is when the integrand $F(\cdot)$ is equivalent to a convex function $G\colon \er^{N\times n} \to [0, \infty)$ modulo a multiplicative factor, i.e., 
\eqn{orli1}
$$
b(x)G(z) \lesssim F(x, z) \lesssim b(x)G(z) +1\,, \qquad 0 \leq b(\cdot),1/b(\cdot) \in L^{\infty}(\Omega)\,.
$$
\begin{corollary}\label{c0} Let $u\in W^{1,p}_{\loc}(\Omega,\mathbb{R}^{N})\cap C^{0,\gamma}_{\loc}(\Omega,\mathbb{R}^{N})$ be a minimizer of functional \eqref{genF} under the assumptions \eqref{assF} and \eqref{pq}. Furthermore, assume that \eqref{orli1} is satisfied too. Then 
\eqref{stima2}
holds whenever $B_{\varrho} \Subset B_r \Subset \Omega$ are concentric balls and for the range of exponents in \eqref{tip}. 
\end{corollary}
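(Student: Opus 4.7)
The corollary differs from Theorem \ref{t1} only in that the non-gap hypothesis \eqref{nolav} is replaced by the structural equivalence \eqref{orli1}. I would therefore aim to verify \eqref{nolav} directly from \eqref{orli1} together with the a priori $C^{0,\gamma}_{\loc}$-regularity of $u$; once this is accomplished, Theorem \ref{t1} applied on each $B_r \Subset \Omega$ will deliver \eqref{stima2}.

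To construct the required recovery sequence, I would fix $B_r \Subset \Omega$ and let $\{\rho_\varepsilon\}$ be a family of standard positive radial mollifiers. For $\varepsilon$ smaller than $\dist(B_r, \partial\Omega)$, setting $u_\varepsilon := u * \rho_\varepsilon$ on $B_r$ gives $u_\varepsilon \in C^\infty(\overline{B_r}, \er^N) \subset W^{1,\infty}(B_r, \er^N)$, with $[u_\varepsilon]_{0,\gamma; B_r} \leq [u]_{0,\gamma; B_r}$ (positivity of $\rho_\varepsilon$) and $u_\varepsilon \to u$ strongly in $W^{1,p}(B_r, \er^N)$. Taking $H := [u]_{0,\gamma; B_r} + 1$, this places $\{u_\varepsilon\}$ in $\mathcal{C}_H(u, B_r)$ in the sense of \eqref{ilC}.

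The crux is the energy convergence $\mathcal{F}(u_\varepsilon, B_r) \to \mathcal{F}(u, B_r)$, which is where \eqref{orli1} enters. By Jensen applied to the convex $G$, one obtains the pointwise bound $G(Du_\varepsilon) \leq \rho_\varepsilon * G(Du)$ on $B_r$; multiplying by $b$ and integrating yields, via Fubini and $b \in L^\infty$,
\[
\limsup_{\varepsilon \to 0} \int_{B_r} b(x) G(Du_\varepsilon) \dx \leq \int_{B_r} b(x) G(Du) \dx\,,
\]
while standard lower semicontinuity of $w \mapsto \int b(x) G(Dw)\dx$ provides the matching $\liminf$. Combined with the pointwise a.e. convergence $b G(Du_\varepsilon) \to b G(Du)$, a Scheff\'e-type argument upgrades this to $L^1$ convergence, and the upper bound in \eqref{orli1} transfers equi-integrability from $\{bG(Du_\varepsilon)\}$ to $\{F(\cdot, Du_\varepsilon)\}$. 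Vitali's convergence theorem, together with a.e. convergence $F(x, Du_\varepsilon) \to F(x, Du)$ along a subsequence, then delivers \eqref{approenergia}. Proposition \ref{approssi1} yields $\mathcal{L}_{F, H}(u, B_r) = 0$, and Theorem \ref{t1} returns the bound \eqref{stima2}.

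The main obstacle is precisely the energy convergence step: since $F(x,z)$ is not convex in the pair $(x,z)$, Jensen cannot be applied to $F$ directly, and hypothesis \eqref{orli1} is used to transfer the convex inequality from the tractable $z$-only integrand $G$, so that $L^1$ convergence of $F(\cdot, Du_\varepsilon)$ is recovered only indirectly via equi-integrability.
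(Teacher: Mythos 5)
Your proposal follows the same route as the paper: approximate $u$ by mollifications $u_\varepsilon=u*\rho_\varepsilon$, pass the $C^{0,\gamma}$ bound to the approximants, prove the energy convergence using Jensen on the convex $G$ together with \eqref{orli1}, invoke Proposition \ref{approssi1} to conclude $\mathcal{L}_{F,H}(u,B_r)=0$, and then apply Theorem \ref{t1}. The Scheff\'e/Vitali upgrade from convergence of integrals to $L^1$-convergence of $F(\cdot,Du_\varepsilon)$, which you work out explicitly, is precisely what the paper delegates to the cited reference \cite[Lemma 12]{ELM}.

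One small inaccuracy: the inequality $[u_\varepsilon]_{0,\gamma;B_r}\le[u]_{0,\gamma;B_r}$ you attribute to positivity of $\rho_\varepsilon$ is not true as stated, because for $x,y\in B_r$ the difference $u_\varepsilon(x)-u_\varepsilon(y)$ samples $u$ at points of $B_{r+\varepsilon}$, so what one actually gets is $[u_\varepsilon]_{0,\gamma;B_r}\le[u]_{0,\gamma;(1+\delta)B_r}$ whenever $\varepsilon<\delta r$. This does not break your argument, since for $\varepsilon$ small enough this is still bounded by your chosen $H=[u]_{0,\gamma;B_r}+1$, so the tail of $\{u_\varepsilon\}$ lies in $\mathcal C_H(u,B_r)$. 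The paper handles the same point slightly differently: it takes $H=[u]_{0,\gamma;(1+\delta)B_r}$ for an auxiliary $\delta>0$, applies Theorem \ref{t1}, and then lets $\delta\to 0$ in the resulting estimate \eqref{deltaine}, obtaining \eqref{stima2} without the extra $+1$ slack. Since the constants in \eqref{stima2} are not explicit, the two routes yield the same statement.
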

Back to the full nonautonomous case, a relevant example in this setting is given by \cite[Theorem 4]{BCM}. This deals with functionals modelled on the double phase functional in \rif{pqfunctional}, i.e., growth conditions as 
\eqn{contrdp}
$$
 |z|^{p}+a(x) |z|^{q}\lesssim F(x,z) \lesssim  |z|^{p}+a(x) |z|^{q} +1 \,, \qquad 0 \leq a(\cdot) \in C^{0, \alpha}(\Omega)\,,
$$
are assumed for every $(x, z)\in \Omega \times \er^{N\times n}$, no matter \rif{assF} are satisfied or not. Then \rif{asy0} guarantees that the approximation in energy \rif{approenergia} holds for a sequence of $W^{1,\infty}$-regular maps $\{w_j\}$, provided 
$w \in C^{0, \gamma}$ holds and the bound in \rif{asy0} is in force. This fact allows to draw another consequence from Theorem \ref{t1}, that is
\begin{corollary}\label{c1} Let $u\in W^{1,p}_{\loc}(\Omega,\mathbb{R}^{N})\cap C^{0,\gamma}_{\loc}(\Omega,\mathbb{R}^{N})$ be a minimizer of functional \eqref{genF} under the assumptions \eqref{assF} and \eqref{pq}. Furthermore, assume that \eqref{contrdp} is satisfied too. Then \eqref{stima2} holds for the range of exponents displayed in \eqref{tip}. 
\end{corollary}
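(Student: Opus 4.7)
The plan is to deduce Corollary \ref{c1} directly from Theorem \ref{t1} by verifying the Lavrentiev-gap hypothesis \eqref{nolav} under the double phase sandwich \eqref{contrdp}. Since assumptions \eqref{assF} and the interpolative gap bound \eqref{pq} are already in force, and since $u\in C^{0,\gamma}_{\loc}(\Omega,\er^N)$ by hypothesis, the only substantive task is to show that $\mathcal{L}_{F,H}(u,B_{r})=0$ for every ball $B_{r}\Subset\Omega$ and for some $H\approx [u]_{0,\gamma;B_r}+1$. By Proposition \ref{approssi1}, combined with the non-negativity of $\mathcal{L}_{F,H}$ recorded in Remark \ref{bullets}, this reduces to exhibiting an approximating sequence $\{w_{j}\}\in\mathcal{C}_{H}(u,B_{r})$ with $\mathcal{F}(w_{j},B_{r})\to\mathcal{F}(u,B_{r})$.

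A preliminary observation is that the assumption \eqref{pq} implies the classical double phase threshold $q<p+\alpha/(1-\gamma)$ of \eqref{asy0}. Indeed $\vartheta\geq 1$ when $p\geq 2$ and $\vartheta=2/p$ when $1<p<2$, so in either case one has $\min\{\alpha,2\gamma\}/[\vartheta(1-\gamma)]\leq \alpha/(1-\gamma)$, placing us in the regime covered by \cite[Theorem 4]{BCM}. The heart of the argument is then to invoke that result: since $u\in C^{0,\gamma}(B_{r},\er^N)$ and \eqref{asy0} holds, it supplies a sequence $\{w_{j}\}\subset W^{1,\infty}(B_{r},\er^{N})$ with $w_{j}\to u$ in $W^{1,p}(B_{r},\er^{N})$, with a uniform Hölder bound $\sup_{j}[w_{j}]_{0,\gamma;B_{r}}\leq C[u]_{0,\gamma;B_{r}}$, and such that
\[
\int_{B_{r}}\bigl(|Dw_{j}|^{p}+a(x)|Dw_{j}|^{q}\bigr)\dx \longrightarrow \int_{B_{r}}\bigl(|Du|^{p}+a(x)|Du|^{q}\bigr)\dx\,.
\]
The two-sided comparison \eqref{contrdp}, combined with a standard equiintegrability/dominated convergence argument along a subsequence of $\{Dw_{j}\}$ converging almost everywhere, then upgrades this to the convergence $\mathcal{F}(w_{j},B_{r})\to\mathcal{F}(u,B_{r})$ for the original integrand $F$.

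Setting $H:=C(1+[u]_{0,\gamma;B_{r}})$, the sequence $\{w_{j}\}$ sits inside $\mathcal{C}_{H}(u,B_{r})$ and realises \eqref{approenergia}; Proposition \ref{approssi1} therefore yields $\mathcal{L}_{F,H}(u,B_{r})=0$. All hypotheses of Theorem \ref{t1} are now satisfied, so \eqref{ine} holds, and substituting the explicit value of $H$ into the right-hand side turns \eqref{ine} into \eqref{stima2}. The main obstacle in the plan is not any new a priori estimate---Theorem \ref{t1} already does the hard analytic work---but rather checking that the spatially-adapted convolution in \cite[Theorem 4]{BCM}, designed to defeat the Lavrentiev phenomenon exactly at the threshold \eqref{asy0}, can be transplanted into the class $\mathcal{C}_{H}$; once the uniform Hölder bound on $\{w_{j}\}$ is secured, the rest of the argument is purely formal.
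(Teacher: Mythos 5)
Your proof follows the paper's own route almost exactly: reduce to Theorem~\ref{t1} by verifying \eqref{nolav}, invoke \cite[Theorem 4]{BCM} to build the approximating sequence $\{w_j\}=\{u*\rho_{\eps_j}\}$, and then apply Proposition~\ref{approssi1}. Two remarks on the details.

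First, a point in your favour: you make explicit the step from the double-phase energy convergence \eqref{convy} to the convergence $\mathcal F(w_j,B_r)\to\mathcal F(u,B_r)$ required by Proposition~\ref{approssi1}, using \eqref{contrdp} plus an equi-integrability / Vitali argument along an a.e.\ convergent subsequence. The paper passes over this silently, but some such argument is genuinely needed, since \eqref{convy} only concerns the model integrand $|z|^p+a(x)|z|^q$, not $F(\cdot)$ itself.

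Second, a point you should tighten. Standard mollification yields $[u*\rho_\eps]_{0,\gamma;B_r}\le[u]_{0,\gamma;B_{r+\eps}}$, i.e.\ a H\"older bound on a \emph{slightly enlarged} ball, not a bound of the form $C[u]_{0,\gamma;B_r}$ with an absolute $C$ (in general $[u]_{0,\gamma;(1+\delta)B_r}$ can jump up relative to $[u]_{0,\gamma;B_r}$ as soon as $\delta>0$, e.g.\ when $u$ develops a $\gamma$-cusp exactly on $\partial B_r$). The paper handles this by fixing a parameter $\delta>0$, restricting $\eps_j\le\delta$, applying Theorem~\ref{t1} with $H=[u]_{0,\gamma;(1+\delta)B_r}$ to obtain \eqref{deltaine}, and only then sending $\delta\to 0$ in the resulting estimate. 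You should insert this $\delta$-limiting step rather than asserting a uniform constant $C$; without it, \eqref{stima2} as written, featuring $[u]_{0,\gamma;B_r}$ and not a seminorm on a larger ball, does not quite follow. This is a fixable imprecision rather than a conceptual gap, and with it your argument coincides with the paper's.
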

More general cases of double sided bounds as in \rif{contrdp} for which similar corollaries hold can be found in \cite[Section 5]{ELM}. We refer to this last paper also a for larger discussion on the use of Lavrentiev gap functionals in this setting. 
\subsection{Novelties and techniques} 
The proof of Theorem \ref{t1} relies of three main ingredients. The first one is the use of a method aimed at approximating the original minimizer $u$ of $\mathcal F$, with higher integrable solutions to a different kind of variational problems. This is necessary as the starting lack of $W^{1,q}$-integrability of $u$ does not allow to use the Euler-Lagrange system of $\mathcal F$. We notice that the possibility of this approximation relies on assumption \rif{nolav}, that can be used, in a sense, to find good boundary values to build the approximating sequence of minimizers. Here we encounter a first difficulty as, in order to get useful a priori estimates, we have to preserve the property of being $C^{0, \gamma}$-regular, transferring at least some part of it from $u$ to the new, approximating minimizers. Unfortunately, in the vectorial case no maximum principle applies in general and therefore we employ a novel approximation using additional nonlocal terms. For this, we add a suitable truncated Gagliardo-type seminorm term to our functional, together with a more standard $L^{2d}$ penalization term. Specifically, we consider perturbed functionals of the type 
\begin{eqnarray}\label{nonloc}
w&\mapsto&\mathcal{F}(w,B)+\varepsilon\int_{B}\snr{Dw}^{2d} \, dx+\int_{B}(\snr{w}^{2}-M_0^{2})_{+}^{d} \, dx\nonumber \\
&& \hspace{13mm} \,  +\int_{\mathbb{R}^{n}}\int_{\mathbb{R}^{n}}\frac{\left(\snr{w(x)-w(y)}^{2}-M^{2}\snr{x-y}^{2\gamma}\right)^{d}_{+}}{\snr{x-y}^{n+2sd}} \, dx\dy
\end{eqnarray}
for $\eps$ small, and suitably large $d, M_0$ and $M$. The last two ones depend on $\|u\|_{C^{0, \gamma}}$, which is finite by assumption. Here, following a standard notation, we are denoting 
\eqn{tagli}
$$(t-k)_{+}:=\max\{t-k, 0\}\,, \qquad \mbox{for $t, k \in 
\er$}\,.$$ 
Functionals of mixed local/nonlocal type, in the quadratic/linear case $F(x, z)\equiv |z|^2$, have been recently studied in \cite{BDVV1, BDVV2} under special boundary conditions. As fas as we know, this is the first paper where nonlinear problems of this type are considered, and a priori estimates are presented. Let us mention that in the setting of functionals with $(p,q)$-growth conditions, purely local approximations aimed at preserving the starting $L^{\infty}$-information, have been considered for the first time in \cite{CKP} in the autonomous case; see also \cite{DM} for the nonautonomous case under assumptions \rif{assF}. These approximations are made using only the first line in \rif{nonloc}, and are not suitable to preserve the initial $C^{0, \gamma}$-regularity information. Adding the last line in \rif{nonloc} therefore turns out to be crucial. The second ingredient, which is taken from \cite{DM, ELM}, is a suitable use of the difference quotients techniques in the setting of Fractional Sobolev spaces. In this setting, the H\"older continuity of $\partial_zF(\cdot)$ in \rif{assF}$_3$ is automatically read as a fractional differentiability and allows to get estimates in Nikolski spaces for $Du$. At this stage, we finally use the last and third ingredient. In order to get advantage of the assumed H\"older continuity of $u$, we transfer this information to the approximating minimizers in terms of suitable Sobolev-Slobodevsky spaces. This allows to use a Gagliardo-Nirenberg type interpolation inequality, which improves the exponents intervening when using fractional Sobolev embedding theorem in the setting of Caccioppoli type inequalities; see Lemma \ref{fraclem} below. 
\section{Preliminaries}\label{pre}
\subsection{Notation}\label{notsec}
In the rest of the paper, we denote by $\Omega\subset \er^n$ an open subset, $n \geq 2$. %Without loss of generality, we also assume that $\Omega$ is bounded, as our estimates will be local. 
We denote by $c$ a general constant larger than one. Different occurrences from line to line will be still denoted by $c$. Special occurrences will be denoted by $c_1, c_2,  \tilde c$ or likewise. When a relevant dependence on parameters occurs, this will be emphasized by putting the correspondent parameters in parentheses. Finally, the symbol $\lesssim$ denotes inequalities where absolute constants are involved. As usual, we denote by $ B_r(x_0):= \{x \in \er^n  :   |x-x_0|< r\}$, the open ball with center $x_0$ and radius $r>0$; when it is clear from the context, we omit denoting the center, i.e., $B_r \equiv B_r(x_0)$. When not otherwise stated, different balls in the same context will share the same center. We shall also denote $B_1 = B_1(0)$ if not differently specified. Finally, with $B$ being a given ball with radius $r$ and $\sigma$ being a positive number, we denote by $\sigma B$ the concentric ball with radius $\sigma r$. In denoting several function spaces like $L^p(\Omega), W^{1,p}(\Omega)$, we shall denote the vector valued version by $L^p(\Omega,\er^k), W^{1,p}(\Omega,\er^k)$ in the case the maps considered take values in $\er^k$, $k\in \en$. Sometimes we shall abbreviate $L^p(\Omega,\er^k)\equiv L^p(\Omega), W^{1,p}(\Omega,\er^k)\equiv W^{1,p}(\Omega)$. 
With $\mathcal B \subset \er^{n}$ being a measurable subset with bounded positive measure $0<|\mathcal B|<\infty$, and with $w \colon \mathcal B \to \er^{k}$, being a measurable map, we shall denote the integral average of $w$ over $\mathcal B$ by  
$$
   (w)_{\mathcal B} \equiv \mint_{\mathcal B}  w(x) \, dx  :=  \frac{1}{|\mathcal B|}\int_{\mathcal B}  w(x) \, dx\,.
$$
Given $z, \xi \in \er^{N\times n}$, their Frobenius product is defined as $z \cdot \xi= z_i^\alpha \xi_i^\alpha$; it follows that $  \xi \cdot \xi = |\xi|^2$ and in the rest of the paper we shall use the classical Frobenius norm for matrixes.  We shall use a similar notation for the scalar product in $\er^N$. As usual, the symbol $\otimes$ denotes the tensor product; in particular, given $\lambda \in \er^n$ and $\iota \in \er^n$, we have $\iota \otimes \lambda \equiv \{\iota^\alpha \lambda_i\} \in \er^{N\times n}$, $1\leq i \leq n$, $1\leq \alpha \leq N$.  In this paper we use the standard notation
\eqn{notationh}
$$
[w]_{0,\gamma; \mathcal B} := \sup_{x, y\in  \mathcal B, x\not=y}\, \frac{|w(x)-w(y)|}{|x-y|^\gamma}\,,
$$
whenever $ \mathcal B \subset \er^n$ is a subset, $\gamma \in (0,1]$ and $w \colon  \mathcal B \to \er^k$. Accordingly, the $C^{0,\gamma}$-norm of $w$ is defined by $\|w\|_{C^{0,\gamma}( \mathcal B)}:= \|w\|_{L^{\infty}(A)}+[w]_{0,\gamma; \mathcal B} $. 
 
\subsection{Fractional spaces and interpolation inequalities}
We collect here some basic facts about fractional Sobolev spaces. We refer to \cite{ELM, pala} for more results. For a map $w \colon \Omega \to \er^k$ and a vector $h \in \er^n$, we denote by 
$\tau_{h}\colon L^1(\Omega,\er^k) \to L^{1}(\Omega_{|h|},\er^k)$ the standard finite difference
operator pointwise defined as
\eqn{ttau1}
$$
(\tau_{h}w)(x)\equiv \tau_{h}w(x):=w(x+h)-w(x)\;,
$$
whenever $\Omega_{|h|}:=\{x \in \Omega \, : \, 
\dist(x, \partial \Omega) > |h|\}$ is not empty. We shall consider a similar operator acting on maps $\psi \colon  \Omega\times \Omega \to \er^k$, this time defined by 
\eqn{ttau2}
$$ 
(\tilde \tau_{h}w)(x)\equiv\tilde \tau_{h}\psi(x,y):=\psi(x+h, y+h)-\psi(x,y)\,.
$$
\begin{definition}\label{fra1def}
Let $\alpha_{0} \in (0, \infty)\setminus \en$, $p \in [1, \infty)$, $k \in \en$, $n \geq 2$, and let $\Omega \subset \er^n$ be an open subset.
\begin{itemize}
\item If $\alpha_{0} \in (0,1)$, the fractional Sobolev space $W^{\alpha_{0} ,p}(\Omega,\er^k )$ consists of those maps $w \colon \Omega \to \er^k$ such that 
the following Gagliardo type norm is finite:
\begin{eqnarray}
\notag
\| w \|_{W^{\alpha_{0} ,p}(\Omega )} & := &\|w\|_{L^p(\Omega)}+ \left(\int_{\Omega} \int_{\Omega}  
\frac{|w(x)
- w(y) |^{p}}{|x-y|^{n+\alpha_{0} p}} \, dx \dy \right)^{1/p}\\
&=:& \|w\|_{L^p(\Omega)} + [w]_{\alpha_{0}, p;\Omega}\,.\label{gaglia}
\end{eqnarray}
In the case $\alpha_{0} = [\alpha_{0}]+\{\alpha_{0}\}\in \en + (0,1)>1$, it is $w\in W^{\alpha_{0} ,p}(\Omega,\er^k )$ iff  $$
\| w \|_{W^{\alpha_{0} ,p}(\Omega )}  := \| w \|_{W^{[\alpha_{0}],p}(\Omega)} +[D^{[\alpha_{0}]}w]_{\{\alpha_{0}\}, p;\Omega}$$ is finite. The local variant $W^{\alpha_{0} ,p}_{\loc}(\Omega,\er^k )$ is defined by requiring that $w \in W^{\alpha_{0} ,p}_{\loc}(\Omega,\er^k )$ iff $w \in W^{\alpha_{0} ,p}(\tilde{\Omega},\er^k)$ for every open subset $\tilde{\Omega} \Subset \Omega$. 
\item For $\alpha_{0} \in (0,1]$, the Nikol'skii space $N^{\alpha_{0},p}(\Omega,\er^k )$ is defined by prescribing that $w \in N^{\alpha_{0},p}(\Omega,\er^k )$ if and only if
$$\| w \|_{N^{\alpha_{0},p}(\Omega,\er^k )} :=\|w\|_{L^p(\Omega,\er^k)} + \left(\sup_{|h|\not=0}\, \int_{\Omega_{|h|}} 
\frac{|w(x+h)
- w(x) |^{p}}{|h|^{\alpha_{0} p}} \, dx  \right)^{1/p}\;.$$
The local variant $N^{\alpha_{0},p}_{\loc}(\Omega,\er^k )$ is defined analogously to $W^{\alpha_{0} ,p}_{\loc}(\Omega,\er^k )$.\end{itemize}
\end{definition}
We have that $ W^{\alpha_{0} ,p}(\Omega,\er^k)\subsetneqq N^{\alpha_{0},p}(\Omega,\er^k)\subsetneqq
W^{\beta,p}(\Omega,\er^k)$, for every $\beta <\alpha_{0}$, hold for sufficiently domains $\Omega$. These inclusions are somehow quantified in the following lemma 
%%%poctcheck
\begin{lemma} \label{l2}
Let $w\in L^p (\Omega )$, $p\geq 1$, and assume that for $\alpha_0 \in (0,1]$, $S \geq 0$ and an open and bounded set $\tilde{\Omega}
\Subset\Omega$ we have that 
$\|\tau_{h}w\|_{
L^p(\tilde{\Omega})} \leq S |h|^{\alpha_0}$ holds for every
 $h\in \er^n$ satisfying $0<|h|\leq d$, where
$0< d \leq \dist (\tilde{\Omega},
\partial \Omega )$. Then $w\in W^{\beta
,p}(\tilde \Omega,\er^k )$ for every $\beta \in
(0,\alpha_0)$, and the estimate
\eqn{eess}
$$
\| w\|_{W^{\beta ,p}(\tilde \Omega)}\leq c\left(\frac{d^{(\alpha_0-\beta)}S}{(\alpha_0 -\beta)^{1/p}}
+ \frac{\| w\|_{L^{p}(\tilde \Omega)}}{\min\{d^{n/p+\beta},1\}}\right)
$$
holds with $c\equiv c (n,p)$. In particular, let $B_{\varrho} \Subset B_{r}\subset \er^n$ be concentric balls with $r\leq 1$, $w\in L^{p}(B_{r},\mathbb{R}^{k})$, $p>1$ and assume that, for $\alpha_{0} \in (0,1]$, $S\ge 1$, there holds
\eqn{cru1}
$$
\nr{\tau_{h}w}_{L^{p}(B_{\rr},\er^k)}\le S\snr{h}^{\alpha_{0} } \quad \mbox{
for every $h\in \mathbb{R}^{n}$ with $0<\snr{h}\le \frac{r-\rr}{K}$, where $K \geq 1$}\;.
$$
Then it holds that 
\eqn{cru2}
$$
\nr{w}_{W^{\beta,p}(B_{\rr},\er^k)}\le\frac{c}{(\alpha_{0} -\beta)^{1/p}}
\left(\frac{r-\rr}{K}\right)^{\alpha_{0} -\beta}S+c\left(\frac{K}{r-\rr}\right)^{n/p+\beta} \nr{w}_{L^{p}(B_{r},\er^k)}\,,
$$
where $c\equiv c(n,p)$. 
\end{lemma}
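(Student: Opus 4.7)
The plan is to prove this in the classical way, namely by expressing the Gagliardo seminorm as an integral of difference quotients and splitting by the size of the displacement $h$.

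First, for the general statement, I start from the defining integral in \eqref{gaglia} applied with exponent $\beta$ and perform the change of variables $h = y - x$, which gives
\[
[w]_{\beta,p;\tilde{\Omega}}^p = \int_{\R^n}\frac{1}{|h|^{n+\beta p}}\int_{\tilde{\Omega}\cap(\tilde{\Omega}-h)} |\tau_h w(x)|^p \, dx \, dh.
\]
I then split the outer integral into the two regions $\{|h|\le d\}$ and $\{|h|>d\}$. For the first region, since $d\le \dist(\tilde{\Omega},\partial\Omega)$ we have $\tilde{\Omega}\subset\Omega_{|h|}$, so the hypothesis $\|\tau_h w\|_{L^p(\tilde{\Omega})}\le S|h|^{\alpha_0}$ applies and in polar coordinates gives
\[
\int_{|h|\le d}\frac{\|\tau_h w\|_{L^p(\tilde{\Omega})}^p}{|h|^{n+\beta p}}\, dh \;\lesssim\; S^p\int_0^d \rho^{(\alpha_0-\beta)p-1}\,d\rho \;=\; \frac{S^p d^{(\alpha_0-\beta)p}}{(\alpha_0-\beta)p},
\]
which upon taking $p$-th roots yields the first term in \eqref{eess}, with the characteristic singularity $(\alpha_0-\beta)^{-1/p}$ appearing exactly here.

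For the second region $\{|h|>d\}$, I do not need the hypothesis: just bound $|\tau_h w(x)|^p\le 2^{p-1}(|w(x+h)|^p+|w(x)|^p)$ pointwise and use $\int_{\tilde{\Omega}\cap(\tilde{\Omega}-h)}|w(x+h)|^p\,dx\le \|w\|_{L^p(\tilde{\Omega})}^p$ after translation. This reduces everything to
\[
2^p\|w\|_{L^p(\tilde{\Omega})}^p \int_{|h|>d}\frac{dh}{|h|^{n+\beta p}} \;\lesssim\; \frac{\|w\|_{L^p(\tilde{\Omega})}^p}{\beta p \, d^{\beta p}},
\]
which gives a term of order $\|w\|_{L^p(\tilde{\Omega})}/d^{\beta}$. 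Combining with $\|w\|_{L^p(\tilde{\Omega})}$ itself and distinguishing $d\le 1$ from $d>1$ absorbs the $+1$ and yields the factor $1/\min\{d^{n/p+\beta},1\}$ in \eqref{eess} (the exponent $n/p+\beta$ being a conservative bound which is harmless since my elementary computation actually gives the tighter exponent $\beta$).

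Finally, the second, localized statement is a direct corollary: I apply the first part with $\tilde{\Omega}=B_{\rr}$, $\Omega=B_r$, so that $\dist(B_{\rr},\partial B_r)=r-\rr$ permits the choice $d=(r-\rr)/K$, which is $\le 1$ since $r\le 1$ and $K\ge 1$. Estimate \eqref{cru2} then follows by plugging this $d$ into \eqref{eess} and using the trivial inclusion $\|w\|_{L^p(B_{\rr})}\le \|w\|_{L^p(B_r)}$. The only bookkeeping point is to track where the $(\alpha_0-\beta)^{-1/p}$ singularity arises and not to let it be absorbed by a generic constant — this will be important later because $\beta$ will be chosen close to $\alpha_0$ in applications. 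There is no real obstacle here: the proof is essentially Fubini plus a dyadic split, and the entire argument is a short verification.
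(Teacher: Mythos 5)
Your proof is correct and is essentially the paper's own argument: both pass from the Gagliardo double integral to $\int|h|^{-n-\beta p}\|\tau_h w\|^p_{L^p}\,dh$ via Fubini/change of variables, split at $|h|=d$, and bound the near and far regions separately (with the $(\alpha_0-\beta)^{-1/p}$ singularity arising from the same radial integral you identify). The only divergence is in the far region $\{|h|>d\}$: you integrate $|h|^{-n-\beta p}$ there, producing $c(n)/(\beta p\,d^{\beta p})$ and therefore a $\beta^{-1/p}$ factor in the constant, which does \emph{not} strictly realize the claimed dependence $c\equiv c(n,p)$; the paper instead uses the pointwise bound $|x-y|^{-n-\beta p}\le d^{-n-\beta p}$ and integrates over the full product $\tilde\Omega\times\tilde\Omega$, which yields the exponent $n/p+\beta$ and an $|\tilde\Omega|^{1/p}$ factor (silently absorbed in the displayed equality) but no $\beta$-singularity. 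Since in this paper $\beta$ is fixed well away from $0$ and $|\tilde\Omega|\le|B_1|$ in all applications, your variant is equally usable for \eqref{cru2}; but to reproduce the literal statement, with its $\beta$-uniform constant, you should adopt the paper's cruder far-region bound rather than the sharper-in-$d$ one you chose.
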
\begin{proof}  
A main point in Lemma \ref{l2}, is the precise quantitative linkage between the size of $|h|$ appearing in \rif{cru1}, and the dependence on the constants appearing in \rif{cru2}. This will be crucial in the applications we shall made of it. See Sections \ref{hsec1} and \ref{hsec2} below. For this reason we decide to report the easy proof, since it does not appear in the literature but is often reported as folklore. 
Fubini's theorem yields
\begin{eqnarray}
&&\nonumber \int_{\tilde \Omega} \int_{\tilde \Omega\cap \{|x-y|<d\}}  
\frac{|w(x)
- w(y) |^{p}}{|x-y|^{n+\beta p}} \ dx \, dy \leq \int_{B_d(0)} \int_{\tilde \Omega}  
\frac{|w(x+h)
- w(x) |^{p}}{|h|^{n+\beta p}} \ dx \, dh \\
\nonumber
&& \qquad \qquad \qquad\qquad   \leq  \sup_{0<|h|\leq d}\, \int_{\tilde \Omega} 
\frac{|w(x+h)
- w(x) |^{p}}{|h|^{\alpha_0 p}} \ dx   \int_{B_d(0)}\frac{dh}{|h|^{n+(\beta-\alpha_0)p}} \\
\nonumber
&&\qquad \qquad \qquad\qquad \qquad \qquad  \leq  cS^p\int_{0}^{d}\frac{dt}{t^{1+(\beta-\alpha_0)p}} \leq \frac{cd^{(\alpha_0-\beta)p}S^p}{(\alpha_0-\beta)p}
\nonumber \;.
\end{eqnarray}
On the other hand we have that 
\begin{flalign*}
 \int_{\tilde \Omega} \int_{\tilde \Omega\cap \{|x-y|\geq d\}}  
\frac{|w(x)
- w(y) |^{p}}{|x-y|^{n+\beta p}} \ dx \, dy  &\leq \frac{2^{p-1}}{d^{n+\beta p}}\int_{\tilde \Omega} \int_{\tilde \Omega}  
(|w(x)|^p +|w(y)|^{p}) \ dx \, dy\\& = \frac{2^{p}\|w\|_{L^p(\tilde \Omega)}^p}{d^{n+\beta p}}\,.
\end{flalign*}
Connecting the estimates in the last two displays yields
$$
\int_{\tilde \Omega} \int_{\tilde \Omega}  
\frac{|w(x)
- w(y) |^{p}}{|x-y|^{n+\beta p}} \ dx \, dy \leq c \left(\frac{d^{(\alpha_0-\beta)q}S^p}{\alpha_0 -\beta}+ \frac{\| w\|_{L^{p}(\tilde \Omega)}^p}{d^{n+\beta p}}\right)
$$ 
with $c\equiv c(n)$, from which the full inequality in \rif{eess} immediately follows. 
\end{proof}
Recalling notations \rif{notationh} and \rif{gaglia}, we now report a suitable version of the fractonal Sobolev-Morrey embedding. 
\begin{lemma}\label{l9}
Let $w\in W^{s,t}(\er^n,\mathbb{R}^{k})$, with $t\ge 1$, $s\in (0,1)$ such that $s t>n$. If $B\subset \er^n$ is a ball, then $ w\in C^{0,s-n/t}(B,\mathbb{R}^{k})$ and the inequality 
\eqn{prosca}
$$
[w]_{0,s-n/t;B} \leq c  [w]_{s, t;\er^n}, 
$$
holds for a constant $c$, depending only on $n,s,t$.
\end{lemma}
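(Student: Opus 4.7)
The plan is to deduce a Campanato-type estimate by iterating integral averages on dyadic balls, the whole engine being driven by a fractional Poincaré inequality. First, I would establish the following version of fractional Poincaré: for every ball $B_\rho \subset \er^n$,
$$
\int_{B_\rho} |w - (w)_{B_\rho}|^t \, dx \leq c\,\rho^{st}\,[w]_{s,t;B_\rho}^{t}.
$$
This follows from the convexity bound $|w(x)-(w)_{B_\rho}|^t \leq \fint_{B_\rho}|w(x)-w(y)|^t\,dy$, combined with the elementary control $|x-y|^{n+st} \leq (2\rho)^{n+st}$ on $B_\rho\times B_\rho$ to insert the Gagliardo kernel. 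Hölder's inequality then converts this into the $L^1$-type oscillation bound
$$
\fint_{B_\rho} |w - (w)_{B_\rho}| \, dx \leq c\,\rho^{s-n/t}\,[w]_{s,t;B_\rho}.
$$

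Next, I would use this to run a dyadic iteration. Fix $x\in B$ and set $\rho_k=\rho/2^k$. Since $B_{\rho_{k+1}}(x)\subset B_{\rho_k}(x)$, one has
$$
|(w)_{B_{\rho_{k+1}}(x)} - (w)_{B_{\rho_k}(x)}| \leq 2^n\fint_{B_{\rho_k}(x)} |w-(w)_{B_{\rho_k}(x)}|\,dz \leq c\,\rho_k^{s-n/t}\,[w]_{s,t;\er^n}.
$$
The assumption $st>n$ makes $s-n/t>0$, so the series $\sum_k \rho_k^{s-n/t}$ converges geometrically. Hence $\{(w)_{B_{\rho_k}(x)}\}$ is Cauchy and defines a pointwise representative $\tilde w(x)$ at every point $x$; telescoping yields the Campanato-type estimate
$$
|\tilde w(x) - (w)_{B_\rho(x)}| \leq c\,\rho^{s-n/t}\,[w]_{s,t;\er^n}.
$$

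Finally, for $x,y\in B$ I would set $\rho:=2|x-y|$, noting that $B_\rho(y)\subset B_{2\rho}(x)$, and split
$$
|\tilde w(x)-\tilde w(y)| \leq |\tilde w(x)-(w)_{B_\rho(x)}| + |(w)_{B_\rho(x)}-(w)_{B_{2\rho}(x)}| + |(w)_{B_{2\rho}(x)}-(w)_{B_\rho(y)}| + |(w)_{B_\rho(y)}-\tilde w(y)|.
$$
The outer two terms are controlled by the Campanato estimate above, and the middle two are controlled by the same averaging trick (using $|B_{2\rho}|/|B_\rho|=2^n$ and the inclusion $B_\rho(y)\subset B_{2\rho}(x)$ to compare the two averages against $(w)_{B_{2\rho}(x)}$). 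Each piece is bounded by $c\,\rho^{s-n/t}[w]_{s,t;\er^n}$, and since $\rho\approx|x-y|$ this delivers~\eqref{prosca}.

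There is no serious obstacle: the only quantitative point to watch is that the dyadic series converges, which is exactly the hypothesis $st>n$ providing the positive exponent $s-n/t$. Everything else reduces to Jensen, Hölder, and inclusion/volume comparisons between nearby balls.
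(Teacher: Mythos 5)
Your proof is correct. It differs in route from the paper, which reduces to the unit ball via the anisotropic rescaling $\tilde u(x) := R^{-s+n/t}u(x_0+Rx)$ and then invokes the fractional Morrey embedding of \cite{pala} combined with the fractional Poincar\'e inequality, the whole point of the scaling being to read off cleanly that the constant depends only on $n,s,t$. You instead unwind that embedding with an explicit Campanato-type argument: Jensen together with the pointwise kernel bound on $B_\rho\times B_\rho$ give a fractional Poincar\'e estimate of order $\rho^{st}$, H\"older converts this to an $L^1$-oscillation decay of order $\rho^{s-n/t}$, and a dyadic telescoping of averages over shrinking balls upgrades the decay to a pointwise H\"older modulus, the four-term splitting handling general pairs of points. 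Both are valid; yours is longer but entirely self-contained and has the scale invariance built into the homogeneity of the estimates rather than imposed by a change of variables. Two small points worth keeping in mind: the precise representative $\tilde w$ given by the limit of dyadic averages coincides a.e.\ with $w$ by Lebesgue differentiation, and the final estimate quietly uses that the Gagliardo seminorm over any ball is dominated by the one over $\er^n$, which is exactly what makes the auxiliary balls $B_\rho(x)$, $B_{2\rho}(x)$, $B_\rho(y)$ being allowed to escape $B$ harmless under the global hypothesis $w\in W^{s,t}(\er^n,\er^k)$.
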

\begin{proof} This easily follows from the standard proofs in the literature; just a comment on the dependence of the constant $c$. First observe that one can reduce the proof of \rif{prosca} to the case when $B\equiv B_1(0)$ via a standard scaling argument; i.e., considering 
$\tilde u(x):= R^{-s+n/t}u(x_0+Rx)$, where $B= B_R(x_0)$. Once the scaling is done, we can use \cite[(8.8)]{pala} and the fractional Poincar\'e inequality to get \rif{prosca} with an absolute constant depending only on $n,s,t$. 
\end{proof}
The following is a Gagliardo-Nirenberg type inequality in fractional Sobolev spaces, taken from \cite[Section 2.3]{CM2}. The proof relies on a localization argument combined with the global inequalities from \cite[Lemma 1, pag. 329]{BrMi}; see also \cite{BM2, mazya}. To get the explicit dependence of the constants by the radii $1/(r-\varrho)^\kappa$ below, it is sufficient to trace back the dependence on the various constants in \cite[Lemma 2.6]{CM2}. 
\begin{lemma}\label{fraclem}
Let $B_{\rr}\Subset B_{r}\Subset \er^n$ be concentric balls with $r \leq 1$. Let $0\le s_{1}<1<s_{2}<2$, $1<a,t<\infty$, $\tilde{p}>1$ and $\theta\in (0,1)$ be such that
\begin{flalign*}
1=\theta s_{1}+(1-\theta)s_{2},\qquad \frac{1}{\tilde{p}}=\frac{\theta}{a}+\frac{1-\theta}{t}.
\end{flalign*}
Then every function $w\in W^{s_{1},a}(B_{r})\cap W^{s_{2},t}(B_{r})$ belongs to $W^{1,\tilde{p}}(B_{\rr})$ and the inequality 
\eqn{viavia}
$$
\nr{Dw}_{L^{\tilde{p}}(B_{\rr})}\le \frac{c}{(r-\rr)^{\kappa}}[w]^{\theta}_{s_{1},a;B_{r}}\nr{Dw}^{1-\theta}_{W^{s_{2}-1,t}(B_{r})}
$$
holds for constants $c,\kappa\equiv c,\kappa(n,s_{1},s_{2},a,t)$.
\end{lemma}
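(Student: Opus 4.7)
The plan is to reduce the statement to a global (whole-space) Gagliardo--Nirenberg inequality in fractional Sobolev scales and then track the dependence on the radii through a cutoff argument. The global ingredient is the inequality of Brezis--Mironescu \cite{BrMi}: for maps $v$ defined on all of $\er^n$ and with the same scaling compatibility on $(s_1,s_2,a,t,\tilde p,\theta)$, one has
\begin{flalign*}
\nr{Dv}_{L^{\tilde p}(\er^n)}\le c\,[v]^{\theta}_{s_1,a;\er^n}\,\nr{Dv}^{1-\theta}_{W^{s_2-1,t}(\er^n)},
\end{flalign*}
with $c\equiv c(n,s_1,s_2,a,t)$. The localization will be obtained by choosing a cutoff $\eta\in C^{\infty}_{c}(B_{(r+\rr)/2})$ with $\eta\equiv 1$ on $B_{\rr}$, $0\le\eta\le 1$, and $|D^{k}\eta|\lesssim (r-\rr)^{-k}$ for $k=1,2$, and applying the global estimate to $v:=\eta w$ (extended by zero).

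The first step is to observe that $Dw=D(\eta w)$ on $B_{\rr}$, so $\nr{Dw}_{L^{\tilde p}(B_{\rr})}\le\nr{D(\eta w)}_{L^{\tilde p}(\er^n)}$, and then plug in the global inequality. This produces the right-hand side in terms of $[\eta w]_{s_1,a;\er^n}$ and $\nr{D(\eta w)}_{W^{s_2-1,t}(\er^n)}$. The second step is to estimate those seminorms by the corresponding seminorms of $w$ restricted to $B_{r}$, with explicit $(r-\rr)^{-\kappa}$ losses. For $[\eta w]_{s_1,a;\er^n}$ one splits $\eta(x)w(x)-\eta(y)w(y)=\eta(x)(w(x)-w(y))+(\eta(x)-\eta(y))w(y)$, bounds the first term by $[w]_{s_1,a;B_{r}}$ and the second term via
\begin{flalign*}
|\eta(x)-\eta(y)|\le\min\{2,(r-\rr)^{-1}|x-y|\},
\end{flalign*}
splitting the double integral into $|x-y|\le r-\rr$ and $|x-y|\ge r-\rr$, which yields an upper bound of the form
\begin{flalign*}
[\eta w]_{s_1,a;\er^n}\le c\,[w]_{s_1,a;B_{r}}+c(r-\rr)^{-s_1}\nr{w}_{L^{a}(B_{r})}.
\end{flalign*}
For the second piece, Leibniz gives $D(\eta w)=\eta Dw+wD\eta$, and an analogous two-scale splitting of $[\eta Dw]_{s_2-1,t;\er^n}$ and $[wD\eta]_{s_2-1,t;\er^n}$ yields
\begin{flalign*}
\nr{D(\eta w)}_{W^{s_2-1,t}(\er^n)}\le c(r-\rr)^{-\kappa'}\nr{Dw}_{W^{s_2-1,t}(B_{r})}+c(r-\rr)^{-\kappa''}\nr{w}_{L^{t}(B_{r})},
\end{flalign*}
using $|D^{2}\eta|\lesssim(r-\rr)^{-2}$ to control the fractional seminorm of $D\eta$, and a standard fractional Poincar\'e/Sobolev inequality on $B_{r}$ to reabsorb $\nr{w}_{L^{t}(B_{r})}$ into $\nr{Dw}_{W^{s_2-1,t}(B_{r})}$ at the cost of an extra $(r-\rr)^{-\kappa}$ factor.

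Combining the two fractional estimates with the global inequality and reabsorbing the lower-order $L^{a}$ and $L^{t}$ tails (again through interpolation/Poincar\'e applied on $B_{r}$) produces the stated bound with $\kappa=\kappa(n,s_1,s_2,a,t)$. The main obstacle I expect is bookkeeping: at each of the above cutoff steps one generates a new power of $(r-\rr)^{-1}$, and one must keep the exponents consistent with the scaling identities $1=\theta s_1+(1-\theta)s_2$ and $1/\tilde p=\theta/a+(1-\theta)/t$ so that Young's inequality can absorb the auxiliary $L^{a}$- and $L^{t}$-terms into the two factors on the right. This is precisely the quantitative trace-through of the argument in \cite[Lemma 2.6]{CM2} that the statement refers to; once all constants are collected one obtains a single exponent $\kappa$ depending only on $n,s_1,s_2,a,t$, as claimed.
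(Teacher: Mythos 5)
Your proof is correct in outline and follows exactly the route the paper itself indicates: localize the global Gagliardo--Nirenberg inequality of Brezis--Mironescu by applying it to $\eta w$ and controlling the resulting cutoff errors, which is what tracing constants through \cite[Lemma 2.6]{CM2} amounts to. One small point deserves to be made explicit: the lower-order terms $\nr{w}_{L^{a}(B_{r})}$ and $\nr{w}_{L^{t}(B_{r})}$ that the cutoff generates are not controlled by the seminorm $[w]_{s_1,a;B_{r}}$ or by $\nr{Dw}_{W^{s_2-1,t}(B_{r})}$ alone until you first normalize $w$ so that $(w)_{B_{r}}=0$ --- both sides of \eqref{viavia} are invariant under adding a constant, so this is harmless --- and then invoke a fractional Poincar\'e inequality (resp.\ the ordinary Poincar\'e inequality) to bound $\nr{w}_{L^{a}(B_{r})}\lesssim r^{s_1}[w]_{s_1,a;B_{r}}$ and $\nr{w}_{L^{t}(B_{r})}\lesssim r\nr{Dw}_{L^{t}(B_{r})}$, letting the absorption go through.
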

Next, a classical iteration lemma of \cite[Lemma 6.1]{giu}, that is 
\begin{lemma}\label{itercz}
Let $\mathcal{Z}\colon [\varrho_{0},\varrho_{1}]\to \mathbb{R}$ be a nonnegative and bounded function, and let $\theta \in (0,1)$ and $A,B\ge 0$, $\gamma_{1},\gamma_{2}\ge 0$ be numbers. Assume that
\begin{flalign*}
\mathcal{Z}(t)\le \theta \mathcal{Z}(s)+\frac{A}{(s-t)^{\gamma_{1}}}+\frac{B}{(s-t)^{\gamma_{2}}}
\end{flalign*}
holds for $\varrho_{0}\le t<s\le \varrho_{1}$. Then the following inequality holds with $c\equiv c(\theta,\gamma_{1},\gamma_{2})$:
\begin{flalign*}
\mathcal{Z}(\varrho_{0})\le \frac{cA}{(\varrho_{1}-\varrho_{0})^{\gamma_{1}}}+\frac{cB}{(\varrho_{1}-\varrho_{0})^{\gamma_{2}}}\;.
\end{flalign*}
\end{lemma}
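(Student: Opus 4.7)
The plan is to iterate the hypothesis along a geometric sequence of radii squeezing from $\rho_0$ towards $\rho_1$, in the style of the classical Giusti hole-filling iteration. Fix a parameter $\lambda\in(0,1)$ to be chosen momentarily, and define $t_0:=\rho_0$ and $t_{i+1}:=t_i+(1-\lambda)\lambda^i(\rho_1-\rho_0)$ for $i\geq 0$. Then $t_i\nearrow \rho_1$, all $t_i$ lie in $[\rho_0,\rho_1]$, and $t_{i+1}-t_i=(1-\lambda)\lambda^i(\rho_1-\rho_0)$.

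Applying the assumed inequality with $(t,s)=(t_i,t_{i+1})$ gives
\begin{equation*}
\mathcal{Z}(t_i)\le \theta\,\mathcal{Z}(t_{i+1})+\frac{A\,\lambda^{-i\gamma_1}}{(1-\lambda)^{\gamma_1}(\rho_1-\rho_0)^{\gamma_1}}+\frac{B\,\lambda^{-i\gamma_2}}{(1-\lambda)^{\gamma_2}(\rho_1-\rho_0)^{\gamma_2}}.
\end{equation*}
Iterating this $k$ times, starting from $i=0$, one obtains
\begin{equation*}
\mathcal{Z}(\rho_0)\le \theta^k\mathcal{Z}(t_k)+\sum_{i=0}^{k-1}\theta^i\left[\frac{A\,\lambda^{-i\gamma_1}}{(1-\lambda)^{\gamma_1}(\rho_1-\rho_0)^{\gamma_1}}+\frac{B\,\lambda^{-i\gamma_2}}{(1-\lambda)^{\gamma_2}(\rho_1-\rho_0)^{\gamma_2}}\right].
\end{equation*}

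Now choose $\lambda\in(0,1)$ close enough to $1$ so that both $\theta\lambda^{-\gamma_1}<1$ and $\theta\lambda^{-\gamma_2}<1$, e.g.\ any $\lambda>\theta^{1/\max\{\gamma_1,\gamma_2,1\}}$ works (if $\gamma_1=\gamma_2=0$ convergence is automatic for any $\lambda$). With such a choice the two geometric series $\sum_i(\theta\lambda^{-\gamma_j})^i$ converge to constants depending only on $\theta,\gamma_j$. Since $\mathcal{Z}$ is bounded on $[\rho_0,\rho_1]$ by hypothesis and $\theta^k\to 0$, the term $\theta^k\mathcal{Z}(t_k)$ vanishes as $k\to\infty$. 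Letting $k\to\infty$ in the previous display yields the desired estimate with
\begin{equation*}
c:=\max\left\{\frac{1}{(1-\lambda)^{\gamma_1}\bigl(1-\theta\lambda^{-\gamma_1}\bigr)},\ \frac{1}{(1-\lambda)^{\gamma_2}\bigl(1-\theta\lambda^{-\gamma_2}\bigr)}\right\},
\end{equation*}
which depends only on $\theta,\gamma_1,\gamma_2$.

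The only real point requiring care is the trade-off in the choice of $\lambda$: taking $\lambda$ too close to $0$ makes the factors $\lambda^{-i\gamma_j}$ explode too fast for the geometric absorption provided by $\theta^i$ to compensate, while taking $\lambda$ too close to $1$ makes the prefactor $(1-\lambda)^{-\gamma_j}$ blow up. The choice $\theta^{1/\max\{\gamma_1,\gamma_2,1\}}<\lambda<1$ fixes a specific $\lambda$ depending only on the data $(\theta,\gamma_1,\gamma_2)$, giving a finite constant $c$ of the claimed type. Boundedness of $\mathcal{Z}$ is used exactly once, to discard the remainder $\theta^k\mathcal{Z}(t_k)$ in the limit.
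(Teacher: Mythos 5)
Your proof is correct and is exactly the classical geometric-iteration argument; the paper does not give its own proof but simply cites this as Lemma 6.1 of Giusti's book, and your argument (geometric decomposition of the annulus, iterating the hypothesis, choosing $\lambda$ between $\theta^{1/\max\{\gamma_1,\gamma_2,1\}}$ and $1$ so that the series $\sum_i(\theta\lambda^{-\gamma_j})^i$ converge, and discarding the remainder via boundedness of $\mathcal{Z}$) is precisely the standard one found there.
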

\section{Proof of Theorems \ref{t1},\ref{t2} and Corollaries \ref{c0},\ref{c1}}
These proofs take twelve different steps. The first ten are dedicated to the proof of Theorem \ref{t1}. In Step 11 we deal with Corollaries \ref{c0},\ref{c1} and Step 12 is dedicated to Theorem \ref{t2}. 
\subsection{Step 1: Choice of parameters}\label{st1} Fix $\mathfrak{q}$ as in \rif{tip}. We start considering parameters $s,d$ and $\beta$ initially satisfying
\eqn{scelta iniziale}
$$ 0 \leq s < \gamma  \,, \qquad 2d > \max\{q,n\}\,,\ \qquad  0< \beta < \min\{\alpha,2\gamma\}\,.$$
Accordingly, we define the function $\tilde p \equiv \tilde p (s,d, \beta)$ as
\begin{flalign}\label{tp}
\tilde{p}:=\frac{2d\left[p(1-s)+\beta\right]}{\beta+2d(1-s)}\ \ \mbox{if}\ \ p\ge 2\qquad \mbox{and}\qquad \tilde{p}:=\frac{2dp[2(1-s)+\beta]}{p\beta+4d(1-s)} \ \ \mbox{if} \ \ 1<p<2\,.
\end{flalign}
Notice that the inequality $d > p/2$, which is true by \rif{scelta iniziale}, in particular implies 
\eqn{luckybound}
$$
\tilde p < 2d\,.
$$
The function $\tilde p$ is increasing in all its variables (we again use that $d \geq p/2$ for this), with
\eqn{vava1}
$$
\lim_{s\to \gamma, d\to \infty, \beta\to \min\{\alpha,2\gamma\}}\, \tilde p (s,d, \beta)= 
p+\frac{\min\{\alpha,2\gamma\}}{\vartheta(1-\gamma)}\,,$$
where $\vartheta$ has been defined in \rif{pq}. In the following, we always take $d$ such that
\eqn{beta0}
 $$d > \frac{\max\{q,n\}}{2s} \Longrightarrow \beta_0:=s-\frac{n}{2d}>0\,.$$
In view of \rif{vava1} we further increase both $s$ and $d$, while still keeping \rif{beta0}, and find $\beta$  such that 
\eqn{beta1}
$$
\beta < \alpha_0:=\min\{\alpha, 2\beta_0\}< \min\{\alpha, 2\gamma\}
$$
and 
\eqn{range}
$$
 q \leq  \mathfrak{q}< \tilde p (s,d, \beta)  <  p+ \frac{\min\{\alpha,2\gamma\}}{ \vartheta(1-\gamma)}\;.
$$
Notice that, by further increasing $s,d,\beta$ still in the range fixed in \rif{scelta iniziale}, conditions \rif{beta0}-\rif{range} still hold (as noticed before, $\tilde p(s,d,\beta)$ is increasing with respect to all its variables). Keeping this in mind, we next distinguish two cases. When $p\geq 2$, notice that 
\begin{flalign*}
&\lim_{s\to \gamma, d\to \infty, \beta \to  \min\{\alpha,2\gamma\}}\frac{\tilde{p}(s,d,\beta)(q-p)}{\tilde{p}(s,d,\beta)-p}\frac{1-s}{p(1-s)+\beta} \\ & \qquad \qquad = 
  \lim_{s\to \gamma, d\to \infty, \beta \to  \min\{\alpha,2\gamma\}} \frac{2d(q-p)(1-s)}{\beta (2d-p)} =
\frac{(q-p)(1-\gamma)}{\min\{\alpha,2\gamma\}} \stackrel{\rif{pq}}{<}1\,.
\end{flalign*}
Therefore, we finally again increase $s,d$ and $\beta$ again, in order to have
\eqn{270}
$$
\frac{\tilde{p}(s,d,\beta)(q-p)}{\tilde{p}(s,d,\beta)-p}\frac{1-s}{p(1-s)+\beta}<1\,.
$$
When instead $1<p<2$, we notice that 
\begin{flalign*}
&\lim_{s\to \gamma, d\to \infty, \beta \to  \min\{\alpha,2\gamma\}}\frac{\tilde{p}(s,d,\beta)(q-p)}{\tilde{p}(s,d,\beta)-p}\frac{2(1-s)}{p[2(1-s)+\beta]}\\ & \qquad \qquad 
\lim_{s\to \gamma, d\to \infty, \beta \to  \min\{\alpha,2\gamma\}} \frac{4d(q-p)(1-s)}{p\beta(2d-p)} =  \frac{2(q-p)(1-\gamma)}{p\min\{\alpha,2\gamma\}} \stackrel{\rif{pq}}{<}1\,,
\end{flalign*}
and therefore we again find $s, d$ and $\beta$ such that 
\eqn{280}
$$
\frac{\tilde{p}(s,d,\beta)(q-p)}{\tilde{p}(s,d,\beta)-p}\frac{2(1-s)}{p[2(1-s)+\beta]}<1\,.
$$
From now on we shall always consider this final choice of $s,d$ and $\beta$, and therefore we shall use \rif{tp}-\rif{280} for the rest of the proof. In view of this, from now on we shall express any dependency from $(s,d,\beta)$ as a dependence on $(p,q,\alpha,\gamma, \mathfrak q)$.
In the following, in order to shorten the notation, we shall denote
\eqn{idati}
$$
\data\equiv (n,N,p,q, \nu, L, \alpha,\gamma, \mathfrak q) \,, \qquad \data_{\rm e}  \equiv (n,p,q,\alpha,\gamma, \mathfrak q)\,.
$$
\subsection{Step 2: H\"older and Sobolev extensions}\label{st2} Let us notice that in the definition \rif{lav0} we can replace $\mathcal{C}_H(w,B)$ in \rif{ilC} by
$$
\left\{\{w_{j}\}\subset W^{1,\infty}(B,\mathbb{R}^{N})\colon (w_j)_{B}=0, \ w_{j}\rightharpoonup w \ \mbox{in} \ W^{1,p}(B,\mathbb{R}^{N}), \ \sup_{j}\, [w_{j}]_{0,\gamma;B} \leq H \right\}.
$$  
Such a replacement leaves the values of $\bar{\mathcal{F}}_H$ and $\mathcal {L}_{F,H}$ unaltered. Keeping this fact in mind, we fix a ball $B_{r}\subset B_{2r}\Subset \Omega$ with $r\le 1$ and such that \eqref{nolav} holds, and, by Proposition \ref{approssi1}, we can find a sequence 
\eqn{apreg}
$$\{\tilde{u}_{j}\}\subset W^{1,\infty}(B_{r},\mathbb{R}^{N})\cap C^{0,\gamma}(B_{r},\mathbb{R}^{N}),\qquad (\tilde u_j)_{B}=0$$ such that, eventually passing to a not relabelled subsequence, it holds that
\begin{flalign}\label{conv}
\begin{cases}
\tilde{u}_{j}\rightharpoonup u \ \mbox{weakly in} \ W^{1,p}(B_{r}, \er^N), \ \tilde u_j \to u  \ \mbox{strongly in} \ L^{p}(B_{r}, \er^N) \\
 \mathcal{F}(\tilde{u}_{j},B_{r})\to \mathcal{F}(u,B_{r})\\
r^{-\gamma}\|\tilde{u}_{j}\|_{L^\infty(B_{r})}+ [\tilde{u}_{j}]_{0,\gamma;B_{r}}+r^{-\gamma -n/p}\|\tilde{u}_{j}\|_{L^p(B_{r})} \le cH\\
\nu \|D\tilde{u}_{j}\|_{L^p(B_{r})}^p\leq  \mathcal{F}(\tilde{u}_{j},B_{r})\leq   \mathcal{F}(u,B_{r})+1 
\end{cases}
\end{flalign}
with the last two lines that hold for every $j \geq 1$, and where $c\equiv c(n,p)$ is an absolute constant. All the facts from \rif{apreg} are directly coming from Proposition \ref{approssi1} but \rif{conv}$_3$, that maybe deserves a few words; there only $ [\tilde{u}_{j}]_{0,\gamma;B_{r}}\leq H$ directly comes from the definition of the Lavrentiev gap. For this, notice that, if $x \in B_r$, then by \rif{apreg} we find, by Jensen's inequality, that 
$
|\tilde u_j(x)| = |\tilde u_j(x)- (\tilde u_j)_{B}|\leq cr^\gamma [\tilde u_{j}]_{0,\gamma;B_r}\leq cr^\gamma H. 
$ This implies \rif{conv}$_3$. Now, by \cite[Theorem 2]{mi}, we can extend the $\tilde{u}_{j}$'s to the whole $\mathbb{R}^{n}$ by determining maps $\{\bar{u}_{j}\}$ such that
\begin{flalign}\label{3bis}
[\bar{u}_{j}]_{0,\gamma;\er^n}\le  c[\tilde{u}_{j}]_{0,\gamma;B_{r}} \leq cH
\end{flalign}
for an absolute constant $c$ which is independent of $j$. 
Let $\bar{\eta}\in C^{1}_{0}(B_{3r/2})$ be such that
\begin{flalign}\label{bbeta}
\mathds{1}_{\bar{B}_{r}}\le \bar{\eta}\le \mathds{1}_{\bar{B}_{3r/2}}\quad\mbox{and}\quad  \snr{D\bar{\eta}}\lesssim 1/r
\end{flalign}
and set $\bar{v}_{j}:=\bar{u}_{j}\eta$. By this very definition, \eqref{3bis} and \eqref{bbeta} we have that
\begin{flalign}\label{3.01}
\begin{cases}
\ r^{-\gamma}\|\bar{v}_{j}\|_{L^{\infty}(\er^n)} \leq c_* H\\
\ [\bar{v}_{j}]_{0,\gamma;\mathbb{R}^{n}}\leq  c_*H\\
\ \bar{v}_{j}\equiv \bar{u}_{j}\equiv \tilde{u}_{j} \ \ \mbox{in} \ \ B_{r}\\
\ \supp\, \bar{v}_{j}\Subset B_{3r/2}\\
%\ \|\bar{v}_{j}\|_{L^p(B_{r})} \leq \|u\|_{L^p(B_{r})}+1\\
\   \bar{v}_{j} \in W^{1,2d}(B_r,\er^N) \\
\  r^{s} [\bar v_j]_{s,2d;\er^n}\leq c_*r^{\gamma+n/(2d)}H
\end{cases}
\end{flalign}
hold for every $j\in \en$, an absolute constant $c_*\equiv c_* (n,d,\gamma,s)
\equiv c_* ( \data_{\rm e})$, which is independent of $j$. We confine ourselves to sketch the simple proofs of \rif{3.01}$_{1,2}$ and of \rif{3.01}$_{6}$, the other assertions  being a direct consequence of the definition of $\bar v_j$ (recall also \rif{apreg}). Let us first remark that, triangle inequality, \rif{3bis} and finally \rif{conv}$_3$, imply
\eqn{vedetto}
$$
\|\bar u_j\|_{L^\infty(B_{2r})} \leq c[\tilde u_j]_{0,\gamma;B_{r}}r^\gamma + \|\tilde u_j\|_{L^\infty(B_{r})} \leq c Hr^{\gamma}\,,
$$
that is \rif{3.01}$_1$. In order to prove \rif{3.01}$_{2}$, it is sufficient to notice that if $x,y \in B_{3r/2}$,  then, using \rif{3bis} and \rif{vedetto} it follows 
\begin{flalign*}
|\bar{v}_{j}(y)-\bar{v}_{j}(x)| & =  
|\bar{u}_{j}(y)\eta(y)-\bar{u}_{j}(x)\eta(x)| \leq  
|\bar{u}_{j}(y)-\bar{u}_{j}(x)| +| \eta(y)-\eta(x)|\|\bar{u}_{j}\|_{L^\infty(B_{2r})}\\
& \leq c[\bar u_j]_{0,\gamma;B_{2r}}|x-y|^{\gamma}+ \|D \eta\|_{L^{\infty}(B_r)} \|\bar{u}_{j}\|_{L^\infty(B_{2r})}|x-y|\\
& \leq cH|x-y|^{\gamma}+\frac{c}{r^\gamma}\|\bar{u}_{j}\|_{L^\infty(B_{2r})}|x-y|^{\gamma} \leq 
c_*H|x-y|^{\gamma}\,.
\end{flalign*}
As it is $v_j\equiv 0$ outside $B_{3r/2}$, this is sufficient to conclude with \rif{3.01}$_{2}$. Finally, the proof of \rif{3.01}$_{6}$. By \rif{3.01}$_{1,2,3,4}$, we have 
\begin{flalign}\label{20.02pre}
\notag [\bar v_j]_{s,2d;\er^n}^{2d}&=2\int_{\mathbb{R}^{n}\setminus B_{2r}}\int_{B_{2r}}\frac{\snr{\bar{v}_{j}(x)-\bar{v}_{j}(y)}^{2d}}{\snr{x-y}^{n+2sd}} \dx\dy +\int_{B_{2r}}\int_{B_{2r}}\frac{\snr{\bar{v}_{j}(x)-\bar{v}_{j}(y)}^{2d}}{\snr{x-y}^{n+2sd}} \dx\dy\nonumber \\
&\,  =2\int_{\mathbb{R}^{n}\setminus B_{2r}}\int_{B_{3r/2}}\frac{\snr{\bar{v}_{j}(x)}^{2d}}{\snr{x-y}^{n+2sd}} \dx\dy +\int_{B_{2r}}\int_{B_{2r}}\frac{\snr{\bar{v}_{j}(x)-\bar{v}_{j}(y)}^{2d}}{\snr{x-y}^{n+2sd}} \dx\dy\nonumber \\
&\, \le c\int_{\mathbb{R}^{n}\setminus B_{r/2}}\frac{\dz}{\snr{z}^{n+2sd}} \, \|\bar{v}_{j}\|_{L^{2d}(B_{3r/2})}^{2d} +c [\bar{v}_j]_{0, \gamma;B_{2r}}^{2d}
\int_{B_{2r}}\int_{B_{2r}}\frac{\dx\dy}{\snr{x-y}^{n+2d(s-\gamma)}} 
\nonumber \\
&\, \le \frac{cr^{n+2d(\gamma-s)}H^{2d}}{s(\gamma-s)} \equiv c r^{n+2d(\gamma-s)}H^{2d}
\end{flalign}
for $c\equiv c (\data_{\rm e})$, 
and the proof of \rif{3.01}$_{6}$ follows. 
\subsection{Step 3: Approximation via nonlocal functionals}
We introduce the nonlocal Dirichlet class
$$\mathbb X( \bar{v}_{j},B_r):=\left\{v \in \left( \bar{v}_{j}+W^{1,2d}_{0}(B_{r},\mathbb{R}^{N})\right)\cap W^{s,2d}(\mathbb{R}^{n},\mathbb{R}^{N})\, \colon\, v \equiv \bar{v}_{j}\  \mbox{on}\  \er^n\setminus B_r\right\}\,.$$ This is a convex,  closed subset of $W^{1,2d}_{0}(B_{r},\mathbb{R}^{N})\cap W^{s,2d}(\mathbb{R}^{n},\mathbb{R}^{N})$, and it is non-empty, as $ \bar{v}_{j} \in \mathbb X( \bar{v}_{j},B_r)$ by \rif{3.01}. Next, we define $u_{j}\in \mathbb X( \bar{v}_{j},B_r)$ as the solution to
\begin{flalign}\label{pd}
u_{j}\mapsto \min_{w\in \mathbb X( \bar{v}_{j},B_r)}\,  \mathcal{F}_{j}(w,B_{r})\,,
\end{flalign}
where, keeping in mind the notation in \rif{tagli}, it is
\begin{flalign}
 \mathcal{F}_{j}(w,B_{r})&:=\mathcal{F}(w,B_{r})+\varepsilon_{j} \int_{B_{r}}(\snr{Dw}^{2}+\mu^{2})^{d} \, dx\nonumber \\
&\quad \ \  +\int_{B_{r}}(\snr{w}^{2}-M_0^{2})_{+}^{d} \, dx+\int_{\mathbb{R}^{n}}\int_{\mathbb{R}^{n}}\frac{\left(\snr{w(x)-w(y)}^{2}-M^{2}\snr{x-y}^{2\gamma}\right)^{d}_{+}}{\snr{x-y}^{n+2sd}} \, dx\dy\label{ifunzionali}
\end{flalign}
with
\eqn{1}
$$
\left\{
\begin{array}{c}
\displaystyle  \varepsilon_{j}:=\frac{1}{\left(\nr{D\bar{v}_{j}}^{4d}_{L^{2d}(B_{r})}+j+1\right)}\\ [20 pt] 
\displaystyle
 M_0:=16c_*r^{\gamma}H , \quad  M:=16c_*H\,.
\end{array}
\right.
$$
In \rif{1} $c_*\equiv c_*( \data_{\rm e})$ is the same (absolute) constant has been defined in \rif{3.01}. Let us briefly point out how Direct Methods of the Calculus of Variations apply here to get the existence of $u_j$ in \rif{pd}; the main point is essentially to prove that the functional is coercive on $\mathbb X( \bar{v}_{j},B_r)$. With $j\in \en$ being fixed, let us consider a minimizing sequence $\{w_{j,k}\}_k \subset \mathbb X( \bar{v}_{j},B_r)$, i.e., such that 
\eqn{infy}
$$
\lim_{k}\, \mathcal{F}_{j}(w_{j,k},B_{r})=  \inf_{w\in \mathbb X( \bar{v}_{j},B_r)}\,  \mathcal{F}_{j}(w,B_{r})\,.$$ 
We now have (recall that $w_{j,k}\equiv \bar{v}_{j}$ outside $B_r$, $\bar v_j \equiv 0$ outside $B_{3r/2}$ and $ \bar{v}_{j}\equiv \tilde u_j$ in $B_r$)
%\eqn{rerere}
\begin{flalign}
\notag \|w_{j,k}\|_{L^{2d}(\er^n)}^{2d} &\leq \|\bar v_{j}\|_{L^{2d}(B_{3r/2})}^{2d} +
\|w_{j,k}\|_{L^{2d}(B_r)}^{2d}  \leq 
cr^{n+2d\gamma} H^{2d} + \|w_{j,k}-\bar v_j\|_{L^{2d}(B_r)}^{2d}
 \\
\notag& \leq cr^{n+2d\gamma} H^{2d} +c r^{2d}  \|Dw_{j,k}\|_{L^{2d}(B_{r})}^{2d} +
cr^{2d}  \|D\bar v_{j}\|_{L^{2d}(B_{r})}^{2d}\\
&
\leq 
\frac {cr^{2d} \mathcal{F}_{j}(w_{j,k},B_{r})}{\eps_j } +cr^{2d}  \|D\tilde u_{j}\|_{L^{2d}(B_{r})}^{2d} + c r^{n+2d\gamma} H^{2d} 
\label{rerere}
\end{flalign}
for $c \equiv c(\data_{\rm e})$; in the second estimate in the above display we have used also \rif{3.01}$_1$ and in the last line the very definition of $\mathcal F_j$ from \rif{ifunzionali}. Furthermore, triangle inequality implies 
\begin{flalign}\label{esatto1}
\notag[w_{j,k}]_{s, 2d; B_{2r}}^{2d}&\leq c\int_{B_{2r}}\int_{B_{2r}}\frac{(\snr{w_{j,k}(x)-w_{j,k}(y)}^{2}-M^{2}\snr{x-y}^{2\gamma})^{d}_{+}}{\snr{x-y}^{n+2sd}} \dx\dy\nonumber \\
&\qquad \ + cM^{2d}\int_{B_{2r}}\int_{B_{2r}}\frac{\dx\dy}{\snr{x-y}^{n-2d(\gamma-s)}}\nonumber \\
&\leq  c\mathcal{F}_{j}(w_{j,k},B_{r})+ \frac{cM^{2d}r^{n+2d(\gamma-s)}}{\gamma-s}
=c\mathcal{F}_{j}(w_{j,k},B_{r})+ cr^{n+2d(\gamma-s)}H^{2d}\,,
\end{flalign}
where $c\equiv c( \data_{\rm e})$ and we have used the definition of $M$ from \eqref{1}$_2$ in the last line. 
%postcheck
In turn, using $\eqref{3.01}_1$, and \eqref{rerere}-\eqref{esatto1}, we have
\begin{flalign}
\notag [w_{j,k}]_{s, 2d; \er^n}^{2d}&  = 2\int_{\mathbb{R}^{n}\setminus B_{2r} }\int_{B_{2r}}\frac{\snr{w_{j,k}(x)-w_{j,k}(y)}^{2d}}{\snr{x-y}^{n+2sd}} \dx\dy+[w_{j,k}]_{s, 2d; B_{2r}}^{2d}\nonumber \\
&\notag =2\int_{\mathbb{R}^{n}\setminus B_{2r}}\int_{B_{3r/2}\setminus B_{r}}\frac{\snr{\bar{v}_{j}(x)}^{2d}}{\snr{x-y}^{n+2sd}} \dx\dy+2\int_{\mathbb{R}^{n}\setminus B_{2r}}\int_{B_{r}}\frac{\snr{w_{j,k}(x)}^{2d}}{\snr{x-y}^{n+2sd}} \dx\dy\nonumber \\
&\notag  \qquad +c\mathcal{F}_{j}(w_{j,k},B_{r})+ cr^{n+2d(\gamma-s)}H^{2d}\nonumber \\
&\notag  \le c\int_{\mathbb{R}^{n}\setminus B_{r/2}}\frac{\dz}{\snr{z}^{n+2sd}}\, \|\bar v_{j}\|_{L^{2d}(B_{3r/2})}^{2d}
+\int_{\mathbb{R}^{n}\setminus B_{r/2}}\frac{\dz}{\snr{z}^{n+2sd}}\, 
\|w_{j,k}\|_{L^{2d}(B_r)}^{2d} 
\nonumber \\
&\notag  \qquad +c\mathcal{F}_{j}(w_{j,k},B_{r})+ cr^{n+2d(\gamma-s)}H^{2d}\nonumber \\
& \le \frac {c\mathcal{F}_{j}(w_{j,k},B_{r})}{\eps_j }
+cr^{2d(1-s)}\|D\tilde u_{j}\|_{L^{2d}(B_{r})}^{2d}+  cr^{n+2d(\gamma-s)}H^{2d}\label{splitas}
\end{flalign}
for $c\equiv c( \data_{\rm e})$. Using the content of \eqref{rerere}-\eqref{splitas}, we find (recall it is $r\leq 1$) 
$$
\|w_{j,k}\|_{W^{1,2d}(B_{r})} + \|w_{j,k}\|_{W^{s,2d}(\er^n)}  \leq   \frac {c[\mathcal{F}_{j}(w_{j,k},B_{r})]^{1/(2d)}}{\eps_j ^{1/(2d)}}
+c\|D\tilde u_{j}\|_{L^{2d}(B_{r})} +  cH\,.
$$
This last estimate and \rif{infy} imply that the sequence $\{w_{j,k}\}_k$ is bounded in $W^{1,2d}(B_{r},\mathbb{R}^{N})\cap W^{s,2d}(\mathbb{R}^{n},\mathbb{R}^{N})$ and therefore, up to a not relabelled subsequence, we can assume that $w_{j,k} \rightharpoonup u_j$ weakly as $k \to \infty$, both in $W^{1,2d}(B_{r},\mathbb{R}^{N})$ and $W^{s,2d}(\mathbb{R}^{n},\mathbb{R}^{N})$, for some $u_j \in \mathbb X( \bar{v}_{j},B_r)$. Moreover, again up to a diagonalization argument, we can also assume $w_{j,k}\to u_j$ a.e. At this point we use lower semicontinuity.  Specifically, we use the convexity of $z\mapsto F(x, z)$ and $z\mapsto |z|^{2d}$ to deal with the local part, and Fatou's lemma to deal with the nonlocal one, in order to get
$$
\mathcal{F}_{j}(u_{j},B_{r})\leq  \liminf_{k\to \infty}\mathcal{F}_{j}(w_{j,k},B_{r})\,,
 $$
thereby proving the minimality in \rif{pd} accordingly to the usual Direct Methods of the Calculus of Variations (see for instance \cite{giu}). 
\subsection{Step 4: Convergence to $u$} Here we prove that, up to a non relabelled subsequence, $\{u_j\}$ weakly converges to the original minimizer $u$ in $W^{1,p}(B_{r},\mathbb{R}^{N})$. For this, we start observing that 
$\eqref{1}_{1}$ guarantees that
\begin{flalign}\label{2}
 \varepsilon_{j}\int_{B_{r}}(\snr{D\bar{v}_{j}}^2+\mu^{2})^{d} \, dx \to 0\,.
\end{flalign}
Moreover, using \rif{3.01}$_{2}$ and the definition of $M$ in \rif{1}$_2$, we have that $
 \snr{\bar{v}_{j}(x)-\bar{v}_{j}(y)}\leq M\snr{x-y}^{\gamma}/4\,,
$
holds for every $j \in \en$ and $x, y \in \er^n$. In turn, this implies 
%\eqn{4}
$$
\int_{\mathbb{R}^{n}}\int_{\mathbb{R}^{n}}\frac{(\snr{\bar{v}_{j}(x)-\bar{v}_{j}(y)}^{2}-M^{2}\snr{x-y}^{2\gamma})_{+}^{d}}{\snr{x-y}^{n+2sd}} \dx\dy=0
$$
for every $j \in \en$. By $\eqref{3.01}_{3}$ and the definition of $M_0$ in \rif{1}$_2$, it is
$$
\int_{B_{r}}(\snr{\bar{v}_{j}}^{2}-M^{2}_0)_{+}^d \, dx=0\,.
$$
Using the information in the last two displays, recalling the definition of $\mathcal{F}_{j}$ in \rif{ifunzionali}, and also using \rif{conv}$_2$, \rif{3.01}$_3$ and \rif{2}, we find 
$$
 \lim_{j\to \infty}\, \mathcal{F}_{j}(\bar{v}_{j},B_{r})= \lim_{j\to \infty}\, \mathcal{F}(\tilde u_j,B_{r})+\lim_{j\to \infty}\,\varepsilon_{j}\int_{B_{r}}(\snr{D\bar v_j}^{2}+\mu^{2})^{d} \, dx=\mathcal{F}(u,B_{r})\,.
 $$
Minimality of $u_j$, i.e., $ \mathcal{F}_{j}(u_{j},B_{r})\leq \mathcal{F}_{j}(\bar v_{j},B_{r})$, and the above display, then give
\eqn{6}
$$
\limsup_{j\to \infty}\mathcal{F}_{j}(u_{j},B_{r})\le\mathcal{F}(u,B_{r})\,,
$$
and therefore, up to relabelling, we can assume that $
\mathcal{F}_{j}(u_{j},B_{r}) \leq \mathcal{F}(u,B_{r})+1
$
holds for every $j\in \mathbb{N}$. 
This and \rif{assF}$_1$ imply
\begin{flalign}
\nu \int_{B_{r}}\snr{Du_j}^{p} \, dx+\varepsilon_{j}\int_{B_{r}}(\snr{Du_j}^{2}+\mu^{2})^{d} \, dx \le\mathcal{F}_{j}(u_{j},B_{r})\leq\mathcal{F}(u,B_{r})+1\label{20}\,.
\end{flalign} 
Poincar\'e inequality and \rif{conv}$_{3,4}$ yield
\begin{flalign}
\notag \|u_j\|_{L^p(B_r)} & \leq \|u_j-\tilde u_j\|_{L^p(B_r)}+ \|\tilde u_j\|_{L^p(B_r)} \\
&\leq 
cr\|Du_j\|_{L^p(B_r)} +cr\|D\tilde u_j\|_{L^p(B_r)} + cr^{n/p+\gamma}H \leq c[\mathcal{F}(u,B_{r})+1]^{1/p} +  cr^{n/p}H\,.\label{serve}
\end{flalign}
Therefore, up to a non relabelled sequence, we can assume that 
\begin{flalign}\label{7}
u_{j}\rightharpoonup v \quad \mbox{in} \ \ W^{1,p}(B_{r},\mathbb{R}^{N}), \ \ \mbox{for some $v$ such that  $v \in u+W^{1,p}_0(B_{r},\mathbb{R}^{N})$}\,.
\end{flalign}
Weak lower semicontinuity and \rif{6} then imply
\eqn{weak1}
$$
\mathcal{F}(v,B_{r}) \leq \liminf_{j\to \infty}\mathcal{F}(u_{j},B_{r})\leq \limsup_{j\to \infty}\mathcal{F}_{j}(u_{j},B_{r})\le\mathcal{F}(u,B_{r})\,,
$$
In turn, as $u-v\in W^{1,p}_{0}(B_{r},\mathbb{R}^{N})$, the minimality of $u$ renders that $\mathcal{F}(u,B_{r})\le \mathcal{F}(v,B_{r})$ and so $\mathcal{F}(u,B_{r})=\mathcal{F}(v,B_{r})$. The strict convexity of $z\mapsto F(\cdot,z)$ implied by $\eqref{assF}_{2}$ leads to 
\begin{flalign}\label{30}
u=v \ \ \mbox{almost everywhere on} \ \ B_{r}\,.
\end{flalign}
This means that \rif{weak1} now becomes
\begin{flalign*}
\mathcal{F}(u,B_{r})\le \liminf_{j\to \infty}\mathcal{F}(u_{j},B_{r})\le \limsup_{j\to \infty}\mathcal{F}_j(u_{j},B_{r})\leq \mathcal{F}(u,B_{r})\,,
\end{flalign*}
so that, recalling the definition of $\mathcal F_j$ in \rif{ifunzionali}, we gain
$$
\lim_{j\to \infty}\int_{\mathbb{R}^{n}}\int_{\mathbb{R}^{n}}\frac{\left(\snr{u_{j}(x)-u_{j}(y)}^{2}-M^{2}\snr{x-y}^{2\gamma}\right)_{+}^{d}}{\snr{x-y}^{n+2sd}} \, \dx\dy=
\lim_{j\to \infty}\int_{B_{r}}(\snr{u_{j}}^{2}-M^{2}_0)^{d}_{+} \dx =0\,.
$$
Up to a not relabelled subsequence, we can therefore assume that 
\begin{flalign}\label{40}
\begin{cases}
\displaystyle \int_{\mathbb{R}^{n}}\int_{\mathbb{R}^{n}}\frac{\left(\snr{u_{j}(x)-u_{j}(y)}^{2}-M^{2}\snr{x-y}^{2\gamma}\right)_{+}^{d}}{\snr{x-y}^{n+2sd}} \, \dx\dy\leq cr^{n+2d(\gamma-s)}H^{2d} \\ \\ 
\displaystyle
\int_{B_{r}}(\snr{u_{j}}^{2}-M^{2}_0)^{d}_{+} \dx\leq r^{n+2\gamma d} H^{2d}
\end{cases}
\end{flalign}
hold for all $j\in \N$. Recalling the definition of $M_0$ in \rif{1}$_2$, a direct consequence of \rif{40}$_2$ is
\eqn{sisisi}
$$
\|u_{j}\|_{L^{2d}(B_{r})}^{2d}\le cr^{n+2\gamma d} H^{2d}\,. 
$$
Then, since $u_j \equiv  \bar{v}_{j}$ outside $B_r$, as $u_j \in \mathbb X( \bar{v}_{j},B_r)$, and since in turn $v_j \equiv 0$ outside $B_{3r/2}$, by \rif{sisisi} and \rif{3.01}$_{1}$ we get 
\eqn{20.03}
$$
\|u_{j}\|_{L^{2d}(\er^n)}^{2d}\leq \|u_{j}\|_{L^{2d}(B_r)}^{2d}+\|v_{j}\|_{L^{2d}(B_{3r/2})}^{2d} \le cr^{n+2\gamma d} H^{2d}\,.
$$
Using \rif{40}$_1$ it now follows that
\begin{flalign}\label{bubu}
\notag [u_j]_{s, 2d;B_{2r}}^{2d}&\leq c\int_{B_{2r}}\int_{B_{2r}}\frac{(\snr{u_{j}(x)-u_{j}(y)}^{2}-M^{2}\snr{x-y}^{2\gamma})^{d}_{+}}{\snr{x-y}^{n+2sd}} \dx\dy\nonumber \\
&\qquad \ + cM^{2d}\int_{B_{2r}}\int_{B_{2r}}\frac{\dx\dy}{\snr{x-y}^{n-2d(\gamma-s)}}\nonumber \\
&\leq  cr^{n+2d(\gamma-s)}H^{2d}+  cM^{2d}r^{n+2d(\gamma-s)}=cr^{n+2d(\gamma-s)}H^{2d}
\end{flalign}
where $c\equiv c( \data_{\rm e})$. Using this last estimate, and splitting again as in \rif{20.02pre}, we have
\begin{flalign*}
[u_j]_{s, 2d;\er^n}^{2d}&=2\int_{\mathbb{R}^{n}\setminus B_{2r}}\int_{B_{2r}}\frac{\snr{u_{j}(x)}^{2d}}{\snr{x-y}^{n+2sd}} \dx\dy +[u_j]_{s, 2d;B_{2r}}^{2d}\nonumber \\
& =2\int_{\mathbb{R}^{n}\setminus B_{2r}}\int_{B_{3r/2}\setminus B_{r}}\frac{\snr{\bar{v}_{j}(x)}^{2d}}{\snr{x-y}^{n+2sd}} \dx\dy\\ & \qquad +2\int_{\mathbb{R}^{n}\setminus B_{2r}}\int_{B_{r}}\frac{\snr{u_{j}(x)}^{2d}}{\snr{x-y}^{n+2sd}} \dx\dy\nonumber+ [u_j]_{s, 2d;B_{2r}}^{2d}\\
&\le c\int_{\mathbb{R}^{n}\setminus B_{r/2}}\frac{\dz}{\snr{z}^{n+2sd}}\left(\|\bar{v}_{j}\|_{L^{2d}(B_{3r/2})}^{2d} +\|u_{j}\|_{L^{2d}(B_{r})}^{2d}\right)
+[u_j]_{s, 2d;B_{2r}}^{2d}\nonumber \,.
\end{flalign*}
Using $\eqref{3.01}_1$ and \eqref{20.03}-\rif{bubu} in the above display, we conclude with
\eqn{seminorm} 
$$ [u_{j}]_{s,2d;\mathbb{R}^{n}}^{2d} \le
cr^{n+2d(\gamma-s)}H^{2d}
$$
for $c\equiv c(\data_{\rm e})$. Recalling that $\beta_0=s-n/(2d)>0$ in \rif{beta0}, we get
\begin{flalign}
[u_{j}]_{0,\beta_0;\mathbb{R}^{n}}^{2d}= [u_{j}]_{0,\beta_0;B_{2r}}^{2d}\stackleq{prosca} c [u_{j}]_{s,2d;\mathbb{R}^{n}}^{2d}\stackrel{\eqref{seminorm}}{\le} cr^{n+2d(\gamma-s)}H^{2d}\,,\label{9}
\end{flalign}
for every $j \in \en$ and for a constant $c\equiv c(\data_{\rm e})$ which is independent of $j$. 
 From now on, we shall be using the following notation:
\eqn{laF}
$$
\mathds{F}(u,B_r):= \mathcal{F}(u,B_{r}) +r^{n}H^{2d}+1\,. 
$$
By \rif{20}-\rif{serve} we then have  (recall that $2d>p$ and therefore $H \leq H^{2d/p}+1$) 
\eqn{conse0}
$$
\|u_j\|_{W^{1,p}(B_r)} \leq c [\mathds{F}(u,B_r)]^{1/p}
$$
for $c\equiv c (\data)$, while 
\rif{seminorm}-\rif{9} and $r\leq 1$ imply
\eqn{conse}
$$
 [u_{j}]_{s,2d;\mathbb{R}^{n}}\leq c [\mathds{F}(u,B_r)]^{1/(2d)}\qquad \mbox{and} \qquad 
[u_{j}]_{s,2d;\mathbb{R}^{n}}^{2(d-1)}
[u_{j}]_{0,\beta_0;\mathbb{R}^{n}}^{2} \leq c\mathds{F}(u,B_r)\,,
$$
for $c\equiv c (\data_{\rm e})$. 
\subsection{Step 5: The Euler-Lagrange system}
We adopt the short notation
\eqn{adopt}
$$
F_{j}(x,z):=F(x,z)+\varepsilon_{j}(\snr{z}^{2}+\mu^{2})^{d}\,.
$$
The Euler-Lagrange system of the functional $\mathcal{F}_{j}$, computed for variations of the type $u_j +t\varphi$ for $\varphi\in W^{1,2d}_{0}(B_{r},\mathbb{R}^{N})$ and $t \in (-1, 1)$, is now
\begin{eqnarray}\label{el}
&& 0= \int_{B_{r}}\partial_{z}F_{j}(x,Du_{j})\cdot D\varphi \, dx+2d\int_{B_{r}}(\snr{u_{j}}^{2}-M^{2}_0)^{d-1}_{+}u_{j}\cdot\varphi \, \dx\nonumber \\
&&+2d\int_{\er^n}\int_{\er^n}\frac{\left(\snr{u_{j}(x)-u_{j}(y)}^{2}-M^{2}\snr{x-y}^{2\gamma}\right)^{d-1}_{+}(u_{j}(x)-u_{j}(y))\cdot (\varphi(x)-\varphi(y))}{\snr{x-y}^{n+2sd}} \, \dx\dy\,.
\end{eqnarray}
See also \cite{dkp} for the standard derivation concerning the nonlocal term. Equation \rif{el} is in fact  satisfied for all $\varphi\in W^{1,2d}_{0}(B_{r},\mathbb{R}^{N})$. For this, notice that $u_{j}\in W^{1,2d}(B_{r},\mathbb{R}^{N})$ and $F_{j}(\cdot)$ has standard $2d$-growth. Moreover, from Section \ref{st1},  \rif{luckybound} and \rif{range}, it is $2d>\tilde{p}>q$, and also recall that $
W^{1,2d}(B_{r},\mathbb{R}^{N})\hookrightarrow W^{t,2d}(B_{r},\mathbb{R}^{N}) $ for every $t\in (0,1)$, \cite[Proposition 2.2]{pala} so, recalling $\eqref{3.01}_{4}$, we can conclude that $\bar{v}_{j}+W^{1,2d}_{0}(B_{r},\mathbb{R}^{N})\hookrightarrow W^{t,2d}(\mathbb{R}^{n},\mathbb{R}^{N})$ again for all $t\in (0,1)$, and, in particular, for $t=s$. Now, we fix parameters 
\eqn{par1}
$$0<\rr\le \tau_{1}<\tau_{2}\le r$$
and set $\varphi:=\tau_{-h}(\eta^{2}\tau_{h}u_{j})$, which is admissible in \eqref{el}, as we take
\begin{flalign}\label{eta}
\begin{cases}
\eta\in C^{1}_{0}(B_{(3\tau_{2}+\tau_{1})/4})\\
\mathds{1}_{B_{(\tau_{2}+\tau_{1})/2}}\le \eta\le \mathds{1}_{B_{(3\tau_{2}+\tau_{1})/4}}\\
\snr{D\eta}\lesssim \frac{1}{\tau_{2}-\tau_{1}}
\end{cases}
\end{flalign}
and $h\in \mathbb{R}^{n}\setminus \{0\}$ is such that 
\begin{flalign}\label{boundh}
0< \snr{h} < \frac{\tau_{2}-\tau_{1}}{2^{10}}\,.
\end{flalign} Next, notice that by definitions in \rif{ttau1}-\rif{ttau2} we have 
\eqn{riscrivi}
$$
\left\{
\begin{array}{c}
\tau_{-h}(\eta^{2}\tau_{h}u_{j})(x)-\tau_{-h}(\eta^{2}\tau_{h}u_{j})(y) = \tilde \tau_{-h}\psi(x,y)
\\ [8 pt]
\psi(x,y):= \eta^{2}(x)(\tau_{h}u_{j})(x)-\eta^{2}(y)(\tau_{h}u_{j})(y)\,.
\end{array}
\right.
$$
Therefore, testing \eqref{el} with $\varphi$, and using integration-by-parts for finite difference operators, we obtain, rearranging the terms
\begin{flalign}
2d\int_{\er^n}\int_{\er^n}&\frac{\tilde \tau_{h}\left[\left(\snr{u_{j}(x)-u_{j}(y)}^{2}-M^{2}\snr{x-y}^{2\gamma}\right)^{d-1}_{+}(u_{j}(x)-u_{j}(y))\right]}{\snr{x-y}^{n+2sd}}\cdot\psi(x,y) \, \dx\dy\nonumber \\
&+2d\int_{B_{r}}\eta^{2}\tau_{h}\left((\snr{u_{j}}^{2}-M^{2})_{+}^{d-1}u_{j}\right)\cdot\tau_{h}u_{j} \, \dx\nonumber \\
&+\int_{B_{r}}\tau_{h}\partial_{z}F_{j}(x,Du_{j})\cdot\left[\eta^{2}\tau_{h}Du_{j}+2\eta \tau_{h}u_{j}\otimes D \eta\right] \, \dx=:\mbox{(I)}+\mbox{(II)}+\mbox{(III)}=0\,.
\label{completa}
\end{flalign}
\subsection{Step 6: Estimates for the nonlocal term $\mbox{(I)}$}\label{p2non}
For $\lambda\in [0,1]$, and with $h \in \er^n$ as in \rif{completa}, define
\begin{eqnarray*}
U_{\lambda,h}(x,y):=u_{j}(x)-u_{j}(y)+\lambda\ttau_{h}(u_{j}(x)-u_{j}(y))
\end{eqnarray*}
and set
\begin{eqnarray*}
&& \mathcal{A}(x,y,\lambda):=\left(\snr{U_{\lambda,h}(x,y)}^{2}-M^{2}\snr{x-y}^{2\gamma}\right)^{d-1}_{+}U_{\lambda,h}(x,y)\,.
\end{eqnarray*}
From this it follows that
\begin{eqnarray}\label{boh1}
\notag &&\hspace{-2mm}\frac{\d}{\d\lambda}\mathcal{A}(x,y,\lambda)\equiv \mathcal{A}'(x,y,\lambda)\\&& 
\ = 2(d-1)\left(\snr{U_{\lambda,h}(x,y)}^{2}-M_{2}^{2}\snr{x-y}^{2\gamma}\right)_{+}^{d-2}\left(U_{\lambda,h}(x,y)\cdot \tilde{\tau}_{h}(u_{j}(x)-u_{j}(y))\right)U_{\lambda,h}(x,y)\nonumber\\ 
&& \qquad \ \ +\left(\snr{U_{\lambda,h}(x,y)}^{2}-M_{2}^{2}\snr{x-y}^{2\gamma}\right)^{d-1}_{+}\ttau_{h}(u_{j}(x)-u_{j}(y))\,.
\end{eqnarray}
Notice that $U_{\lambda,h}(x,y) = - U_{\lambda,h}(y, x)$ and therefore 
\eqn{boh3}
$$
\mathcal{A}'(x,y,\lambda)=-\mathcal{A}'(y,x,\lambda)\,,$$
holds whenever $x, y \in \er^n$. Moreover, from \rif{riscrivi}$_2$, we have also
\eqn{boh2}
$$
\psi(x,y)=\eta^{2}(x)[ \tilde{\tau}_{h}(u_{j}(x)-u_{j}(y))] +[\eta^{2}(x)-\eta^{2}(y)]\tau_{h}u_{j}(y)\,.
$$
Identities in \rif{boh1} and \rif{boh2} allow to write
\begin{flalign}
\notag \mbox{(I)} &= 2d\int_{\er^n}\int_{\er^n}\left(\int_{0}^{1}\mathcal{A}'(x,y,\lambda)\d\lambda\right)\cdot\frac{\psi(x,y)}{\snr{x-y}^{n+2sd}} \dx \dy\nonumber \\
\notag&= 2d\int_{\er^n}\int_{\er^n}\eta^{2}(x)\left(\int_{0}^{1}\mathcal{A}'(x,y,\lambda)\d\lambda\right)\cdot\frac{\tilde{\tau}_{h}(u_{j}(x)-u_{j}(y))}{\snr{x-y}^{n+2sd}} \dx \dy\nonumber \\
\notag& \qquad  + 2d\int_{\er^n}\int_{\er^n}(\eta^{2}(x)-\eta^{2}(y))\left(\int_{0}^{1}\mathcal{A}'(x,y,\lambda)\d\lambda\right)\cdot\frac{\tau_{h}u_{j}(y)}{\snr{x-y}^{n+2sd}} \dx \dy\nonumber \\
&=:2d\int_{\er^n}\int_{\er^n}\int_{0}^{1}\frac{\eta^{2}(x)\mathcal{B}_{1}(x,y,\lambda)}{\snr{x-y}^{n+2sd}} \d\lambda \dx\dy\notag\\
& \qquad +2d\int_{\er^n}\int_{\er^n}\int_{0}^{1}(\eta^{2}(x)-\eta^{2}(y))\frac{\mathcal{B}_{2}(x,y,\lambda)}{\snr{x-y}^{n+2sd}} \d\lambda \dx\dy\,, \label{pezzo}
\end{flalign}
with obvious meaning of $\mathcal{B}_{1}(x,y,\lambda)$ and $\mathcal{B}_{2}(x,y,\lambda)$. 
We continue with the estimation of the two last integrals above. As for the former, we note that
\begin{flalign}\label{boh}
\mathcal{B}_{1}(x,y,\lambda)&=2(d-1)\left(\snr{U_{\lambda,h}(x,y)}^{2}-M^{2}\snr{x-y}^{2\gamma}\right)_{+}^{d-2}\snr{U_{\lambda,h}(x,y)\cdot \tilde{\tau}_{h}(u_{j}(x)-u_{j}(y))}^{2}\nonumber \\
&\qquad +\left(\snr{U_{\lambda,h}(x,y)}^{2}-M^{2}\snr{x-y}^{2\gamma}\right)^{d-1}_{+}\snr{\ttau_{h}(u_{j}(x)-u_{j}(y))}^{2}\ge 0
\end{flalign}
and therefore it is also
\eqn{defiI}
$$
\mathds I:=
2d\int_{\er^n}\int_{\er^n}\int_{0}^{1}\frac{\eta^{2}(x)\mathcal{B}_{1}(x,y,\lambda)}{\snr{x-y}^{n+2sd}} \d\lambda \dx\dy\geq 0\,.
$$
%In particular,
%\begin{flalign}\label{boh.01}
%\mathcal{B}_{1}(x,y,\lambda) \ \ \mbox{is invariant by swapping} \ x  \ \mbox{and} \ y\,.
%\end{flalign}
In order to estimate the last term appearing in \rif{pezzo}, we start splitting as follows (recall that $\eta\equiv 0$ outside $B_{(3\tau_{2}+\tau_{1})/4}$ by \rif{eta}):
\begin{flalign}
\notag &\int_{\er^n}\int_{\er^n}\int_{0}^{1}(\eta^{2}(x)-\eta^{2}(y))\frac{\mathcal{B}_{2}(x,y,\lambda)}{\snr{x-y}^{n+2sd}} \d\lambda \dx\dy \\
&\qquad = \int_{\mathbb{R}^{n}\setminus B_{\tau_{2}}} \int_{B_{\tau_{2}}} [\ldots] \dx \dy+ \int_{B_{\tau_{2}}}  \int_{\mathbb{R}^{n}\setminus B_{\tau_{2}}}[\ldots]\dx \dy + \int_{B_{\tau_{2}}}\int_{B_{\tau_{2}}} [\ldots] \dx \dy \notag \\
&\qquad =:\mbox{(I)}_{1}+\mbox{(I)}_{2}+\mbox{(I)}_{3}\label{spezza}\,. %=:  \mbox{(I)}_1 + \mbox{(I)}_2+ \mbox{(I)}_3\;.
\end{flalign}
For the estimation of the last three pieces in \rif{spezza}, we rearrange \eqref{boh1} as follows:
\begin{flalign}
\notag \mathcal{A}'(x,y,\lambda)&=2(d-1)\left[\left(\snr{U_{\lambda,h}(x,y)}^{2}-M^{2}\snr{x-y}^{2\gamma}\right)_{+}^{(d-2)/2}\left(U_{\lambda,h}(x,y)\cdot \tilde{\tau}_{h}(u_{j}(x)-u_{j}(y))\right)\right]\nonumber \\
\notag &\qquad  \left(\snr{U_{\lambda,h}(x,y)}^{2}-M^{2}\snr{x-y}^{2\gamma}\right)_{+}^{(d-2)/2}U_{\lambda,h}(x,y)\nonumber \\
& \qquad \qquad  \qquad  +\left(\snr{U_{\lambda,h}(x,y)}^{2}-M^{2}\snr{x-y}^{2\gamma}\right)^{d-1}_{+}\ttau_{h}(u_{j}(x)-u_{j}(y))\;.\label{rearrange}
\end{flalign}
With such an expression at hand, and recalling that $\eta \equiv 0$ outside $B_{(3\tau_{2}+\tau_{1})/4}$ (see \rif{eta}), by H\"older and Young inequalities, we estimate
\begin{eqnarray}
\notag
\snr{\mbox{(I)}_{1}}&\leq  &
\int_{\mathbb{R}^{n}\setminus B_{\tau_{2}}} \int_{B_{\tau_{2}}} \frac{\eta^{2}(x)|\mathcal{B}_{2}(x,y,\lambda)|}{\snr{x-y}^{n+2sd}}  \dx \dy \\
\notag&\le& \frac1{6}\int_{\mathbb{R}^{n}\setminus B_{\tau_{2}}} \int_{B_{(3\tau_{2}+\tau_{1})/4}}\int_{0}^{1}\frac{\eta^{2}(x)\mathcal{B}_{1}(x,y,\lambda)}{\snr{x-y}^{n+2sd}} \d\lambda \dx\dy\\
\notag&& \quad +c\int_{\mathbb{R}^{n}\setminus B_{\tau_{2}}} \int_{B_{(3\tau_{2}+\tau_{1})/4}}\int_{0}^{1}\frac{\snr{U_{\lambda,h}(x,y)}^{2(d-1)}\snr{\tau_{h}u_{j}(y)}^{2}}{\snr{x-y}^{n+2sd}} \d\lambda\dx\dy\nonumber \\
\notag&\le&\frac{\mathds I}{6} +c[u_{j}]_{s,2d;\mathbb{R}^{n}}^{2(d-1)}\left(\int_{B_{(3\tau_{2}+\tau_{1})/4}}\int_{\mathbb{R}^{n}\setminus B_{\tau_{2}}}\frac{\snr{\tau_{h}u_{j}(y)}^{2d}}{\snr{x-y}^{n+2sd}} \dy\dx\right)^{1/d}\nonumber \\
\notag&\le&\frac{\mathds I}{6} +c[u_{j}]_{s,2d;\mathbb{R}^{n}}^{2(d-1)}
[u_{j}]_{0,\beta_0;\mathbb{R}^{n}}^{2}\left(\int_{B_{(3\tau_{2}+\tau_{1})/4}}\int_{\mathbb{R}^{n}\setminus B_{\tau_{2}}}\frac{\dx\dy}{\snr{x-y}^{n+2sd}} \right)^{1/d}\snr{h}^{2\beta_0}\nonumber \\
\notag&\le&\frac{\mathds I}{6} +c[u_{j}]_{s,2d;\mathbb{R}^{n}}^{2(d-1)}
[u_{j}]_{0,\beta_0;\mathbb{R}^{n}}^{2}\left(\int_{B_{(3\tau_{2}+\tau_{1})/4}}\int_{\mathbb{R}^{n}\setminus B_{(\tau_{2}-\tau_{1})/4}}\frac{\dz}{\snr{z}^{n+2sd}} \dx\right)^{1/d}\snr{h}^{2\beta_0}\nonumber \\
&\stackleq{conse}&\frac{\mathds I}{6}+\frac{c r^{n/d}\mathds{F}(u,B_r)}{(\tau_{2}-\tau_{1})^{2s}}\snr{h}^{2\beta_0}\,.
\label{similto}
\end{eqnarray}
Recall that $\mathds I$ has been defined in \rif{defiI}. For the estimate of $\mbox{(I)}_{2}$, we start observing that \rif{boh3} and \rif{boh} imply that 
\eqn{boh4}
$$
\mathcal{B}_{1}(x,y,\lambda)=\mathcal{B}_{1}(y,x,\lambda)$$
holds for every $x, y \in \er^n$.
Then, similarly to $\mbox{(I)}_{1}$, we find
\begin{eqnarray}
\notag \snr{\mbox{(I)}_{2}}&\leq  &
 \int_{B_{\tau_{2}}}\int_{\mathbb{R}^{n}\setminus B_{\tau_{2}}} \frac{\eta^{2}(y)|\mathcal{B}_{2}(x,y,\lambda)|}{\snr{x-y}^{n+2sd}}  \dx \dy \\
&\stackrel{\eqref{boh4}}{=} &
\int_{\mathbb{R}^{n}\setminus B_{\tau_{2}}} \int_{B_{\tau_{2}}} \frac{\eta^{2}(x)|\mathcal{B}_{2}(x,y,\lambda)|}{\snr{x-y}^{n+2sd}}  \dx \dy\stackleq{similto}\frac{\mathds I}{6}+\frac{c r^{n/d}\mathds{F}(u,B_r)}{(\tau_{2}-\tau_{1})^{2s}}\snr{h}^{2\beta_0}\,.
\label{similto1}
\end{eqnarray}
In order to estimate $\mbox{(I)}_{3}$ we preliminary write 
\begin{flalign}
\notag \mbox{(I)}_{3} & =  \int_{B_{\tau_{2}}}\int_{B_{\tau_{2}}}\int_{0}^{1}\eta(x) (\eta(x)-\eta(y))\frac{\mathcal{B}_{2}(x,y,\lambda)}{\snr{x-y}^{n+2sd}} \d\lambda \dx\dy 
\\
& \qquad + \int_{B_{\tau_{2}}}\int_{B_{\tau_{2}}}\int_{0}^{1}\eta(y) (\eta(x)-\eta(y))\frac{\mathcal{B}_{2}(x,y,\lambda)}{\snr{x-y}^{n+2sd}} \d\lambda \dx\dy  =: \mbox{(I)}_{4}+\mbox{(I)}_{5}\label{addi1}\,.
\end{flalign}
In turn, we estimate $\mbox{(I)}_{4}$; recalling \rif{rearrange} and \rif{eta}, using first Young and then H\"older inequality, we find
\begin{eqnarray}
\notag \snr{\mbox{(I)}_{4}}&\le & \int_{B_{\tau_{2}}}\int_{B_{\tau_{2}}}\int_{0}^{1}\eta(x) |\eta(x)-\eta(y)|\frac{|\mathcal{B}_{2}(x,y,\lambda)|}{\snr{x-y}^{n+2sd}} \d\lambda \dx\dy 
\\
\notag&\leq &
\frac{d}{6}\int_{B_{\tau_{2}}}\int_{B_{\tau_{2}}}\int_{0}^{1}\frac{\eta^{2}(x)\mathcal{B}_{1}(x,y,\lambda)}{\snr{x-y}^{n+2sd}} \d\lambda \dx\dy\\
\notag&& \quad + c\int_{B_{\tau_{2}}}\int_{B_{\tau_{2}}}\int_{0}^{1}\frac{\snr{U_{\lambda,h}(x,y)}^{2(d-1)}\snr{\eta(x)-\eta(y)}^2\snr{\tau_{h}u_{j}(y)}^{2}}{\snr{x-y}^{n+2sd}} \d\lambda\dx\dy\nonumber \\
\notag&\leq&\frac{\mathds I}{12}+c[u_{j}]_{s,2d;\er^n}^{2(d-1)}\left(\int_{B_{\tau_{2}}}\int_{B_{\tau_{2}}}\frac{\snr{\tau_{h}u_{j}(y)}^{2d}\snr{\eta(x)-\eta(y)}^{2d}}{\snr{x-y}^{n+2sd}} \dx\dy\right)^{1/d}\nonumber \\
\notag&\leq & \frac{\mathds I}{12}+ c[u_{j}]_{s,2d;\er^n}^{2(d-1)}[u_{j}]_{0,\beta_0;\mathbb{R}^{n}}^{2}\|D\eta\|_{L^\infty}^2\left(\int_{B_{\tau_{2}}}\int_{B_{\tau_{2}}}\frac{\dx\dy}{\snr{x-y}^{n+2(s-1)d}} \right)^{1/d}\nonumber \\
&\stackleq{conse} &\frac{\mathds I}{12}+\frac{c r^{n/d+2(1-s)}\mathds{F}(u,B_r)}{ (\tau_{2}-\tau_{1})^{2}}\snr{h}^{2\beta_0}
\label{similto2}
\end{eqnarray}
for $c\equiv c(\data_{\rm e})$. On the other hand, we also have 
\begin{eqnarray*}
\snr{\mbox{(I)}_{5}}&\le & \int_{B_{\tau_{2}}}\int_{B_{\tau_{2}}}\int_{0}^{1}\eta(y) |\eta(x)-\eta(y)|\frac{|\mathcal{B}_{2}(x,y,\lambda)|}{\snr{x-y}^{n+2sd}} \d\lambda \dx\dy 
\\
&\stackrel{\eqref{boh4}}{=} & \int_{B_{\tau_{2}}}\int_{B_{\tau_{2}}}\int_{0}^{1}\eta(x) |\eta(x)-\eta(y)|\frac{|\mathcal{B}_{2}(x,y,\lambda)|}{\snr{x-y}^{n+2sd}} \d\lambda \dx\dy \\
&\stackleq{similto2} &\frac{\mathds I}{12}+\frac{c r^{n/d+2(1-s)}\mathds{F}(u,B_r)}{ (\tau_{2}-\tau_{1})^{2}}\snr{h}^{2\beta_0}\,.
\end{eqnarray*}
Merging the content of the last three displays yields 
\eqn{stima33}
$$
\snr{\mbox{(I)}_{3}} \leq \frac{\mathds I}{6}+\frac{c r^{n/d+2(1-s)}\mathds{F}(u,B_r)}{ (\tau_{2}-\tau_{1})^{2}}\snr{h}^{2\beta_0}
$$
for $c\equiv c(\data)$, which is the final estimate for $\mbox{(I)}_{3}$. Collecting the estimates found in \rif{similto}, \rif{similto1} and \rif{stima33} to \rif{spezza}, yields
$$
\notag \left|2d\int_{\er^n}\int_{\er^n}\int_{0}^{1}(\eta^{2}(x)-\eta^{2}(y))\frac{\mathcal{B}_{2}(x,y,\lambda)}{\snr{x-y}^{n+2sd}} \d\lambda \dx\dy \right| \leq 
 \frac{\mathds I}{2} + \frac{c r^{n/d}\mathds{F}(u,B_r)}{(\tau_{2}-\tau_{1})^{2}}\snr{h}^{2\beta_0}\,.
$$
Finally, merging this last estimate with \rif{pezzo} and reabsorbing the term $\mathds I$, we conclude with 
\eqn{51}
$$
d\int_{\mathbb{R}^{n}}\int_{\mathbb{R}^{n}}\int_{0}^{1}\frac{\eta^{2}(x)\mathcal{B}_{1}(x,y,\lambda)}{\snr{x-y}^{n+2sd}} \d\lambda\dx\dy\leq  \mbox{(I)} + \frac{c r^{n/d}\mathds{F}(u,B_r)}{ (\tau_{2}-\tau_{1})^{2}}\snr{h}^{2\beta_0}\,,
$$
where $c\equiv c(\data_{\rm e})$. 
\subsection{Step 7: Estimates for the local terms $\mbox{(II)}$ and $\mbox{(III)}$}
Here we benefit from \cite{DM, ELM}. For $\mbox{(II)}$, exactly as in \cite[Section 3.2, (3.27)]{DM}, we have
\begin{eqnarray}
\mbox{(II)}&=&2d\int_{B_{R}}\eta^{2}\int_{0}^{1}\frac{d}{d\lambda}\left((\snr{u_{j}+\lambda\tau_{h}u_{j}}^{2}-M^{2}_0)_{+}^{d-1}(u_{j}+\lambda\tau_{h}u_{j})\right) \, \d \lambda \cdot  \tau_{h}u_{j}  \dx\nonumber 
\\
&=&4d(d-1)\int_{B_{R}}\eta^{2}\int_{0}^{1}(\snr{u_{j}+\lambda \tau_{h}u_{j}}^{2}-M^{2})^{d-2}_{+}((u_{j}+\lambda \tau_{h}u_{j})\cdot \tau_{h}u_{j})^{2} \, \d \lambda  \dx\nonumber\\
&&+2d\int_{B_{R}}\eta^{2}\int_{0}^{1}(\snr{u_{j}+\lambda \tau_{h}u_{j}}^{2}-M^{2}_0)^{d-1}_{+} \, \d\lambda \ \snr{\tau_{h}u_{j}}^{2}  \dx \ge 0\;.\label{52}
\end{eqnarray}
For (III), recalling the definition in \rif{adopt}, we have
\begin{flalign*}
\mbox{(III)}&=\int_{B_{r}}\tau_{h}\partial_{z}F(x,Du_{j})\cdot\left[2\eta  \tau_{h}u_{j}\otimes D\eta+\eta^{2}\tau_{h}Du_{j}\right] \, dx\nonumber \\
&\quad +2d\varepsilon_{j}\int_{B_{r}}\tau_{h}((\mu^{2}+\snr{Du_{j}}^{2})^{d-1}Du_{j})\cdot\left[2\eta  \tau_{h}u_{j}\otimes D\eta+\eta^{2}\tau_{h}Du_{j}\right] \, dx=:\mbox{(III)}_{1}+\mbox{(III)}_{2}\,.
\end{flalign*}
We can then proceed as in \cite[Section 3.2]{DM}. 
By $\eqref{assF}_{2,3}$, a monotonicity estimate as the one in \cite[Section 3.2, term (I)$_j$]{DM} gives (recall that no $x$-dependence appears in this term)
\begin{flalign}
\notag \mbox{(III)}_{1} &\ge \frac 1c\int_{B_{r}}\eta^{2}(\snr{Du_{j}(x+h)}^{2}+\snr{Du_{j}(x)}^{2}+\mu^{2})^{(p-2)/2}\snr{\tau_{h}Du_{j}}^{2}  \, dx\\
&\qquad -\frac{c\snr{h} }{\tau_{2}-\tau_{1}}\int_{B_{\tau_{2}}}(\snr{Du_{j}}^{2}+1)^{q/2} \, dx\,,\label{rep1}
\end{flalign}
for $c\equiv c(n,N,\nu,L,p,q)$. Similarly, we have 
\begin{flalign}
\notag \mbox{(III)}_{2} &\ge \frac{\varepsilon_{j}}{c}\int_{B_{r}}\eta^{2}(\snr{Du_{j}(x+h)}^{2}+\snr{Du_{j}(x)}^{2}+\mu^{2})^{d-1}\snr{\tau_{h}Du_{j}}^{2}  \, dx\\
&\qquad -\frac{c\eps_j\snr{h}}{\tau_{2}-\tau_{1}}\int_{B_{\tau_{2}}}(\snr{Du_{j}}^{2}+1)^{d} \, dx\,.\label{rep1bis}
\end{flalign}
Connecting the inequalities in the last three displays we conclude with
\begin{flalign*}
\notag &\int_{B_{r}}\eta^{2}(\snr{Du_{j}(x+h)}^{2}+\snr{Du_{j}(x)}^{2}+\mu^{2})^{(p-2)/2}\snr{\tau_{h}Du_{j}}^{2}  \, dx \\&\qquad \leq  c\mbox{(III)}+\frac{c\snr{h}^{\alpha}}{\tau_{2}-\tau_{1}}\int_{B_{\tau_{2}}}(\snr{Du_{j}}^{2}+1)^{q/2} \, dx+ \frac{c\eps_j\snr{h}}{\tau_{2}-\tau_{1}}\int_{B_{\tau_{2}}}(\snr{Du_{j}}^{2}+1)^{d} \, dx\,,
\end{flalign*}
for $c\equiv c(n,N,\nu,L,p,q)$. Using \rif{20} in the last inequality and recalling \rif{laF}, finally leads to
\begin{flalign}
\notag &\int_{B_{r}}\eta^{2}(\snr{Du_{j}(x+h)}^{2}+\snr{Du_{j}(x)}^{2}+\mu^{2})^{(p-2)/2}\snr{\tau_{h}Du_{j}}^{2}  \, dx \\&\qquad \leq  c\mbox{(III)}+\frac{c\snr{h}^{\alpha}}{\tau_{2}-\tau_{1}}\left(
\mathds{F}(u,B_r)+\nr{Du_{j}}_{L^{q}(B_{\tau_{2}})}^q+1\right)\label{12}
\end{flalign}
for $c\equiv c(n,N,\nu,L,p,q) $. 
\subsection{Step 8: Local/nonlocal Caccioppoli type inequality}
Connecting \eqref{51}, \eqref{52} and \eqref{12} to \rif{completa}, and recalling $\eqref{eta}_{2}$, we end up with
\begin{flalign}\label{53}
\notag &\int_{B_{(\tau_{2}+\tau_{1})/2}}\int_{B_{(\tau_{2}+\tau_{1})/2}}\int_{0}^{1}\frac{\mathcal{B}_{1}(x,y,\lambda)}{\snr{x-y}^{n+2sd}} \d\lambda\dx\dy \\
& \qquad \quad +\int_{B_{(\tau_{2}+\tau_{1})/2}}(\snr{Du_{j}(x+h)}^{2}+\snr{Du_{j}(x)}^{2}+\mu^{2})^{(p-2)/2}\snr{\tau_{h}Du_{j}}^{2}  \dx \notag\\
& \qquad \qquad \quad \le \frac{c\snr{h}^{\alpha_0}}{(\tau_{2}-\tau_{1})^{2}}\left(\mathds{F}(u,B_r)+\nr{Du_{j}}_{L^{q}(B_{\tau_{2}})}^q+1\right)\,,
\end{flalign}
where $c\equiv c(\data)$ and $\alpha_0$ has been defined in \rif{beta1}.
\subsection{Step 9: Iteration and a priori estimate for $p\ge 2$}\label{hsec1}
Recalling the non-negativity in \rif{defiI}, estimate \eqref{53} with $p\ge 2$ yields
\eqn{analoga}
$$
\int_{B_{(\tau_{2}+\tau_{1})/2}}\snr{\tau_h Du_{j}}^{p} \, \dx \le \frac{c\snr{h}^{\alpha_0}}{(\tau_{2}-\tau_{1})^{2}}\left(\mathds{F}(u,B_r)+\nr{Du_{j}}_{L^{q}(B_{\tau_{2}})}^q+1\right)\,.
$$
Lemma \ref{l2} and \rif{boundh} now provide that 
\eqn{analoga2}
$$
u_{j}\in W^{1+\beta/p,p}(B_{(\tau_{2}+\tau_{1})/2},\mathbb{R}^{N})\cap W^{s,2d}(B_{r},\mathbb{R}^{N})
$$
holds together with (via \rif{viavia}, using \rif{analoga})
\begin{flalign}
\nr{u_{j}}_{W^{1+\beta/p,p}(B_{(\tau_{2}+\tau_{1})/2})}&\le \frac{c}{(\tau_{2}-\tau_{1})^{(n+\beta)/p}}\nr{u_{j}}_{W^{1,p}(B_{\tau_{2}})}\nonumber \\
&\qquad +\frac{c}{(\tau_{2}-\tau_{1})^{(2+\beta-\alpha_0)/p}}\left([\mathds{F}(u,B_r)]^{1/p}+\nr{Du_{j}}_{L^{q}(B_{\tau_{2}})}^{q/p}+1\right)\nonumber \\
&\leq \frac{c}{(\tau_{2}-\tau_{1})^{(n+\beta)/p}}\left([\mathds{F}(u,B_r)]^{1/p}+\nr{Du_{j}}_{L^{q}(B_{\tau_{2}})}^{q/p}+1\right)\,.\label{14}
\end{flalign}
Here $\beta <\alpha_0$ has been fixed in \rif{beta1}, and $c\equiv c(\data)\geq 1$, recall \rif{idati}; in the last stimate we have also used \rif{conse0}. Now we apply Lemma \ref{fraclem} with parameters $s_{1}=s$, $s_{2}=1+\beta/p$, $a= 2d$ and $t=p$ to get
\eqn{analoga3}
$$
\nr{Du_{j}}_{L^{\tilde{p}}(B_{\tau_{1}})}\le \frac{c}{(\tau_{2}-\tau_{1})^{\kappa}}[u_{j}]_{s,2d;B_{\tau_{2}}}^{\theta}\nr{Du_{j}}_{W^{\beta/p,p}(B_{(\tau_{2}+\tau_{1})/2})}^{1-\theta}
$$
for 
\eqn{iltheta}
$$\theta=\frac\beta{p(1-s)+\beta}\,,$$ 
$ \tilde p$ as in \rif{tp}, 
and where $c,\kappa\equiv c,\kappa(\data_{\rm e})$ are derived from Lemma \ref{fraclem}. By using 
\eqref{conse} and \eqref{14} in \rif{analoga3}, we obtain
\begin{flalign}\label{15}
\notag \nr{Du_{j}}_{L^{\tilde{p}}(B_{\tau_{1}})} & \le
   \frac{c}{(\tau_{2}-\tau_{1})^{(n+\beta)/p+\kappa}}
[\mathds{F}(u,B_r)]^{\frac{1-\theta}{p}+\frac{\theta}{2d}}
\\ &\qquad + 
 \frac{c}{(\tau_{2}-\tau_{1})^{(n+\beta)/p+\kappa}}
[\mathds{F}(u,B_r)]^{\frac{\theta}{2d}} \nr{Du_{j}}_{L^{q}(B_{\tau_{2}})}^{\frac{q(1-\theta)}{p}}\,.
\end{flalign}
By \rif{range} we can use the interpolation inequality
\begin{flalign}\label{18}
\nr{Du_{j}}_{L^{q}(B_{\tau_{2}})}\le \nr{Du_{j}}_{L^{\tilde{p}}(B_{\tau_{2}})}^{\theta_{1}}\nr{Du_{j}}_{L^{p}(B_{\tau_{2}})}^{1-\theta_{1}},
\end{flalign}
where $\theta_{1}\in (0,1)$ satisfies the identity
\begin{flalign}\label{sig}
\frac{1}{q}=\frac{\theta_{1}}{\tilde p}+\frac{1-\theta_{1}}{p} \ \Longrightarrow \ \theta_{1}=\frac{\tilde{p}(q-p)}{q(\tilde{p}-p)}\,.
\end{flalign}
Plugging \eqref{18} in \eqref{15} yields
\begin{flalign}
\notag \nr{Du_{j}}_{L^{\tilde{p}}(B_{\tau_{1}})}&\le  \frac{c}{(\tau_{2}-\tau_{1})^{(n+\beta)/p+\kappa}}
[\mathds{F}(u,B_r)]^{\frac{1-\theta}{p}+\frac{\theta}{2d}}\\
& \qquad  + 
\frac{c}{(\tau_{2}-\tau_{1})^{(n+\beta)/p+\kappa}}
[\mathds{F}(u,B_r)]^{\frac{\theta}{2d}} \nr{Du_{j}}_{L^{\tilde{p}}(B_{\tau_{2}})}^{\frac{\theta_{1}q(1-\theta)}{p}}\nr{Du_{j}}_{L^{p}(B_{\tau_{2}})}^{\frac{(1-\theta_{1})q(1-\theta)}{p}}\,,
\label{19}
\end{flalign}
again with $c\equiv c(\data)$, $\kappa\equiv \kappa(\data_{\rm e})$.  
Notice that
\begin{flalign}
\frac{q\theta_1(1-\theta)}{p}\stackrel{\rif{iltheta}, \rif{sig}}{=}\frac{\tilde{p}(s,d,\beta)(q-p)}{\tilde{p}(s,d,\beta)-p}\frac{1-s}{p(1-s)+\beta}\stackrel{\rif{270}}{<}1\,.\label{27}
\end{flalign}
This allows to apply Young inequality in \rif{19} with conjugate exponents
\begin{flalign*}
\left(\frac{p}{q\theta_{1}(1-\theta)},\frac{p}{p-q\theta_{1}(1-\theta)}\right)
\end{flalign*}
in order to get, using also $\eqref{assF}_{1}$ and \eqref{20},
\begin{flalign*}
\nr{Du_{j}}_{L^{\tilde{p}}(B_{\tau_{1}})}\le \frac{1}{2}\nr{Du_{j}}_{L^{\tilde{p}}(B_{\tau_{2}})}+\frac{c}{(\tau_{2}-\tau_{1})^{\kappa_1}}\left[\mathds{F}(u,B_r)\right]^{\tilde \kappa}\,,
\end{flalign*}
for suitable exponents $\kappa_1,  \kappa_2$ depending on $\data_{\rm e}$ and $c\equiv c(\data)$. Lemma \ref{itercz} together with the content of the previous display finally gives
$$
\nr{Du_{j}}_{L^{\tilde{p}}(B_{\rr})}\le\frac{c}{(r-\rr)^{\kappa_1}}\left[\mathds{F}(u,B_r)\right]^{\tilde \kappa}\,,
$$
for $c\equiv c(\data)$ and $\kappa_{1},\kappa_{2}\equiv \kappa_1,\kappa_{2}(\data_{\rm e})$. Recalling \rif{laF} and that $\tilde p > \mathfrak{q}$, this implies
$$
\nr{Du_j}_{L^{\mathfrak{q}}(B_{\rr})}\le \frac{c}{(r-\rr)^{\kappa_{1}}}\left(\mathcal{F}(u,B_{r}) +r^{n}H^{2d}+1\right)^{\tilde \kappa}\,.
$$
By using \rif{7} and \rif{30}, the assertion in \rif{ine} now follows by lower semicontinuity with $\kappa_2 = 2\tilde \kappa d $. This concludes the proof of Theorem \ref{t1} in the case $p\geq 2$. 
\begin{remark} {\em From the proof above and the choice of the parameters in Step 1, it is not difficult to see that when 
$$q,\mathfrak q \to p+ \frac{\min\{\alpha,2\gamma\}}{ \vartheta(1-\gamma)}\,,$$
thereby forcing $\tilde p$ to approach the same value, then $d\to \infty$. This makes the exponent $\kappa_2$ in the final estimate \rif{ine} blow-up. This is a typical phenomenon in regularity for $(p,q)$-growth functionals, already encountered in the form of the a priori estimates found in \cite{DM, ELM, M1}.}
\end{remark}
\subsection{Step 10: Iteration and a priori estimate for $1<p<2$}\label{hsec2}
 In this case we use H\"older inequality to estimate
\begin{flalign}\label{22}
\int_{B_{(\tau_{2}+\tau_{1})/2}}\snr{\tau_{h}Du_{j}}^{p} \dx &\le\left(\int_{B_{(\tau_{2}+\tau_{1})/2}}(\snr{Du_{j}(x+h)}^{2}+\snr{Du_{j}(x)}^{2}+\mu^{2})^{\frac{p-2}{2}} \snr{\tau_{h}Du_{j}}^{2}\dx\right)^{p/2}\nonumber \\
&\qquad \quad \cdot\left(\int_{B_{(\tau_{2}+\tau_{1})/2}}(\snr{Du_{j}(x+h)}^{2}+\snr{Du_{j}(x)}^{2}+\mu^{2})^{p/2} \dx\right)^{1-p/2}\nonumber \\
&\stackleq{conse0} c\left(\int_{B_{(\tau_{2}+\tau_{1})/2}}(\snr{Du_{j}(x+h)}^{2}+\snr{Du_{j}(x)}^{2}+\mu^{2})^{p/2}  \dx\right)^{p/2} \notag\\
& \qquad \quad \cdot \left[\mathds{F}(u,B_r)\right]^{1-p/2}\nonumber \\
&\stackleq{53} \frac{c\snr{h}^{\alpha_0p/2}}{(\tau_{2}-\tau_{1})^{p}}\left(\mathds{F}(u,B_r)+\nr{Du_{j}}_{L^{q}(B_{\tau_{2}})}^q+1\right)\,,
\end{flalign}
where $c \equiv c(\data)$. This is the analog of \rif{analoga} in the case $p<2$. We can therefore proceed as in the case $p\geq 2$; we report some of the details in the following as different bounds are coming up. As for \rif{analoga2}, Lemma \ref{l2} gives $
u_{j}\in W^{1+\beta/2,p}(B_{(\tau_{2}+\tau_{1})/2},\mathbb{R}^{N})\cap W^{s,2d}(B_{r},\mathbb{R}^{N})$, for $\beta$ as in \rif{beta1}, 
with the estimate (thanks to \rif{22})
\begin{flalign}
\nr{u_{j}}_{W^{1+\beta/2,p}(B_{(\tau_{2}+\tau_{1})/2})}&\le\frac{c}{(\tau_{2}-\tau_{1})^{n/p+\beta/2}}\nr{u_{j}}_{W^{1,p}(B_{\tau_{2}})}\nonumber \\
& \qquad +\frac{c}{(\tau_{2}-\tau_{1})^{1+(\beta-\alpha_0)/2}}
\left([\mathds{F}(u,B_r)]^{1/p}+\nr{Du_{j}}_{L^{q}(B_{\tau_{2}})}^{q/p}+1\right)\nonumber 
\\
&\leq\frac{c}{(\tau_{2}-\tau_{1})^{n/p+\beta/2}}\left([\mathds{F}(u,B_r)]^{1/p}+\nr{Du_{j}}_{L^{q}(B_{\tau_{2}})}^{q/p}+1\right)\,,
\label{23}
\end{flalign}
with $c\equiv c(\data)$. It is now the turn of Lemma \ref{fraclem} applied with parameters $s_{1}=s$, $s_{2}=1+\beta/2$, $a= 2d$ and $t=p$ to get
\eqn{24}
$$
\nr{Du_{j}}_{L^{\tilde{p}}(B_{\tau_{1}})}\le\frac{c}{(\tau_{2}-\tau_{1})^{\kappa}}[u_{j}]^{\theta}_{s,2d;B_{\tau_{2}}}\nr{Du_{j}}_{W^{\beta/2,p}(B_{(\tau_{2}+\tau_{1})/2})}^{1-\theta}
$$
where this time it is
\eqn{iltheta2}
$$\theta:=\frac{\beta}{2(1-s)+\beta}\,,$$ and where $ \tilde p$ is as in \rif{tp}, 
and $c,\kappa\equiv c,\kappa(\data_{\rm e})$ are derived from Lemma \ref{fraclem}. Plugging  \eqref{conse} and \eqref{23}  into \eqref{24}, we again arrive at the following analog of \rif{15}:
\begin{flalign}\label{15bis}
\notag \nr{Du_{j}}_{L^{\tilde{p}}(B_{\tau_{1}})} & \le
   \frac{c}{(\tau_{2}-\tau_{1})^{n/p+\beta/2+\kappa}}
[\mathds{F}(u,B_r)]^{\frac{1-\theta}{p}+\frac{\theta}{2d}}
\\ &\qquad + 
 \frac{c}{(\tau_{2}-\tau_{1})^{n/p+\beta/2+\kappa}}
[\mathds{F}(u,B_r)]^{\frac{\theta}{2d}} \nr{Du_{j}}_{L^{q}(B_{\tau_{2}})}^{\frac{q(1-\theta)}{p}}\,.
\end{flalign}
 Next, 
keeping in mind that $\tilde{p}>q$ by \rif{range}, we can apply the interpolation inequality \rif{18} 
 in \rif{15bis}, with $\theta_{1}\in (0,1)$ as in \eqref{sig} (but with the different, current definition of $\tilde p$). As for the case $p\geq 2$, using \rif{18} in \rif{15} yields \rif{19}, with
$$
\frac{q\theta_1(1-\theta)}{p}\stackrel{ \rif{sig},\rif{iltheta2}}{=}\frac{\tilde{p}(s,d,\beta)(q-p)}{\tilde{p}(s,d,\beta)-p}\frac{2(1-s)}{p[2(1-s)+\beta]}\stackrel{\rif{280}}{<}1\,.
$$
We can therefore proceed exactly as after \rif{27} to arrive at \rif{ine} and the proof of Theorem \ref{t1} is finally complete. 

\subsection{Step 11: Proof of Corollaries \ref{c0},\ref{c1}}\label{convsec0} In both cases the proof relies on the application of Theorem \ref{t1}, where we verify that $\mathcal {L}_{F,H}(u,B_{r})=0$ holds for every ball $B_r \Subset \Omega$, for suitable choices of $H >0$. We fix $B_r \Subset \Omega$ such that $ [u]_{0, \gamma;B_r}>0$ (otherwise the assertion is trivial) and we start from Corollary \ref{c1}, where we use \cite[Theorem 4]{BCM} in order to get a sequence $\{\tilde{u}_{j}\}$ of $W^{1,\infty}(B_r)$-regular maps such that 
\eqn{convy}
$$\int_{B_r} \left[|D\tilde{u}_{j}|^p+a(x)|D\tilde{u}_{j}|^{q}\right]\dx \to \int_{B_r} \left[|Du|^p+a(x)|Dw|^{q}\right]\dx\,,$$
and $\tilde u_j \to u$ strongly in $W^{1,p}(B_r)$. The sequence $\{\tilde{u}_{j}\}$ has been constructed in \cite{BCM} via mollification, i.e., $\tilde u_{j} := u*\rho_{\eps_j}$. Here, with 
$0<\delta \leq \min\{{\rm  dist}(B_r, \partial \Omega)/20 \}$ being fixed, $\{\eps_j\}$ can be taken as a sequence such that $\eps_j \to 0$, $0< \eps_j \leq\delta$. Moreover, $\{\rho_{\eps}\}$ is a family of standard mollifiers generated by a smooth,  non-negative and radial function 
$\rho \in C^\infty_0(B_1)$ such that $\|\rho\|_{L^1(\er^n)}=1$, via 
$\rho_{\eps}(x) = \eps^{-n} \rho(x/\eps)$. It follows that 
\eqn{sceltaH}
$$[\tilde{u}_{j}]_{0, \gamma;B_r} \leq [u]_{0, \gamma;(1+\delta)B_r}\,,$$
for every $j \in \en$. Proposition \ref{approssi1} then implies $\mathcal {L}_{F,H}(u,B_{r})=0$ for $H=[u]_{0, \gamma;(1+\delta)B_r}$. Therefore Theorem \ref{t1} applies, with \rif{ine} that translates into
\eqn{deltaine}
$$
\nr{Du}_{L^{\mathfrak{q}}(B_{\rr})}\le \frac{c}{(r-\rr)^{\kappa_{1}}}\left([\mathcal{F}(u,B_{r})]^{1/p}+[u]_{0, \gamma;(1+\delta)B_r}+1\right)^{\kappa_{2}}\;,
$$
which holds whenever $B\varrho\Subset B_r$ is concentric to $B_r$.   
Finally, letting $\delta \to 0$ in the above inequality leads to \rif{stima2} and the proof of Corollary \ref{c1} is complete. We only notice that we can invoke \cite[Theorem 4]{BCM} as we are assuming \rif{pq}, which is a more restrictive condition than the one considered in \rif{asy0}, which is in turn sufficient to prove \rif{convy}, as shown in \cite{BCM}. For Corollary \ref{c0}, the proof is totally similar and uses the same convolution argument of Corollary \ref{c1} to find the approximating sequence $\{\tilde u_j\}\subset W^{1,\infty}(B_r)$ such that 
the approximation in energy $\mathcal F(\tilde u_j, B_r) \to 
\mathcal F(u, B_r)$ and \rif{sceltaH} hold. In this case the proof of the approximation in energy is directly based on Jensen's inequality and the convexity of $G(\cdot)$. The details can be found in \cite[Lemma 12]{ELM}. No bound on the gap $q/p$ is required at this stage and \rif{pq} is only needed to refer to Theorem \ref{t1}, i.e., to prove the a priori estimates. The rest of the proof then proceeds as the one for Corollary \ref{c1}. 

\subsection{Step 12: Proof of Theorem \ref{t2}}\label{convsec}
This follows with minor modifications from the proof of Theorem \ref{t1}, that we briefly outline here. 
As the integrand $F(\cdot)$ is now $x$-independent and convex, we are in the situation of Corollary \ref{c0}, where we can verify \rif{orli1} with $b(\cdot)\equiv 1$ and $G(\cdot)\equiv F(\cdot)$. It follows that $\mathcal {L}_{F,H}(u,B_{r})=0$ with the choice $H=[u]_{0, \gamma;(1+\delta)B_r}$, where $\delta>0$ can be chosen arbitrarily small as in the proofs of Corollaries \ref{c0},\ref{c1}. 
As for the part concerning the a priori estimates, the proof follows the one given for Theorem \ref{t1} verbatim, once replacing, everywhere, $\alpha$ by $1$. With the current choice of $H$, this leads to establish \rif{deltaine}, from which 
\rif{stima2} finally follows letting $\delta \to 0$. 
\section{More on the autonomous case}\label{finalsec}
In the autonomous case $F(x, Dw)\equiv F(Dw)$, considering assumptions on the Hessian of  $F(\cdot)$ of the type in \rif{seconde}, leads to bounds that are better than \rif{pqa}. Specifically, we assume that $F \colon \er^{N\times n} \to \er$ is locally $C^2$-regular in $\er^{N\times n}\setminus\{0\}$ and satisfies
\eqn{asp1}
$$
\left\{
\begin{array}{c}
\nu (|z|^2+\mu^2)^{p/2} \leq F(z) \leq L(|z|^2+\mu^2)^{q/2}+L(|z|^2+\mu^2)^{p/2}\\[5 pt]
(|z|^2+\mu^2) |\partial_{zz} F(z)| \leq L(|z|^2+\mu^2)^{q/2}+L(|z|^2+\mu^2)^{p/2}\\ [5 pt]
\nu (|z|^2+\mu^2)^{(p-2)/2}|\xi|^2\leq    \partial^ 2F(z)\xi\cdot \xi\;, 
 \end{array}\right.
$$
for every choice of $z, \xi \in \er^{N\times n}$ such that $|z|\not=0$, and for exponents $1 \leq p \leq q$. As usual, $0<\nu\leq 1 \leq L$ are fixed ellipticity constants and $\mu \in [0,1]$. Such assumptions are for instance considered in \cite{BM, M1}. Using \rif{asp1}, instead of \rif{assF}, in the statement of Theorem \ref{t2} we can replace the right-hand side quantity in \rif{pqa} and \rif{tipau} by the larger
\eqn{leads to}
$$
p+\frac{2\gamma}{\vartheta(1-\gamma)}\,.
$$
The proof can be obtained along the lines of the one for Theorem \ref{t1}, using different, and actually more standard estimates in Step 6. In particular, estimates \rif{rep1}-\rif{rep1bis} can be replaced by 
\begin{flalign*}
\mbox{(III)}_{1}+\mbox{(III)}_{2}&\ge \frac 1c\int_{B_{r}}\eta^{2}(\snr{Du_{j}(x+h)}^{2}+\snr{Du_{j}(x)}^{2}+\mu^{2})^{(p-2)/2}\snr{\tau_{h}Du_{j}}^{2}  \, dx\\
& \qquad -\frac{c\snr{h}^{2}}{(\tau_{2}-\tau_{1})^2}\int_{B_{\tau_{2}}}(\snr{Du_{j}}^{2}+1)^{q/2} \, dx-\frac{c\eps_j \snr{h}^{2}}{(\tau_{2}-\tau_{1})^2} \int_{B_{\tau_{2}}}(\snr{Du_{j}}^{2}+1)^{d} \, dx\,,
\end{flalign*}
and therefore estimate \rif{12} can be replaced by
\begin{flalign*}
&\int_{B_{r}}\eta^{2}(\snr{Du_{j}(x+h)}^{2}+\snr{Du_{j}(x)}^{2}+\mu^{2})^{(p-2)/2}\snr{\tau_{h}Du_{j}}^{2}  \, \dx \\
& \qquad\qquad   \leq c\mbox{(III)}+\frac{c\snr{h}^{2}}{(\tau_{2}-\tau_{1})^2}\left(
\mathcal{F}(u,B_{r})+\nr{Du_{j}}_{L^{q}(B_{\tau_{2}})}^q+1\right)\,.
\end{flalign*}
This leads to change the parameters in Step 1. Specifically, we replace everywhere the quantity $ \min\{\alpha,2\gamma\}$, by $ 2\gamma=\min\{2,2\gamma\}$ starting from \rif{scelta iniziale}, while in \rif{beta1} we define $
\alpha_0:=2 \beta_0< 2 \gamma$. The rest of the proof follows unaltered and leads to establish \rif{ine} using now the bound in \rif{leads to}. 

\end{document}